\newtheorem{theorem}{Theorem}[section]
\newtheorem{lemma}[theorem]{Lemma}
\newtheorem{proposition}[theorem]{Proposition}
\numberwithin{equation}{section}
\numberwithin{figure}{section}
\newcommand{\re}{\text{{\rm Re}\,}}
\newcommand{\im}{\text{{\rm Im}\,}}
\newcommand{\sgn}{\text{{\rm sgn}\,}}
\newcommand{\R}{\mathbb{R}}
\newcommand{\C}{\mathbb{C}}
\newcommand{\T}{\mathbb{T}}
\newcommand{\E}{\mathbb{E}}
\newcommand{\I}{{\rm i}}
\begin{document}
\setcounter{page}{1}

\thanks{Supported by the grant KAW 2015.0270 from the Knut and Alice Wallenberg
Foundation}

\dedicatory{Dedicated to the memory of Harold Widom 1932--2021}

\title[Strong Szeg\H{o} theorem on a Jordan curve]
{Strong Szeg\H{o} theorem on a Jordan curve}
\author[K.~Johansson]{Kurt Johansson}

\address{
Department of Mathematics,
KTH Royal Institute of Technology,
SE-100 44 Stockholm, Sweden}

\email{kurtj@kth.se}

\begin{abstract}
We consider certain determinants with respect to a sufficiently regular Jordan curve $\gamma$ in the complex plane that generalize Toeplitz determinants which are obtained when the curve is the circle. This also corresponds to studying a planar Coulomb gas on the curve at inverse temperature $\beta=2$. Under suitable assumptions on the curve we prove a strong Szeg\H{o} type asymptotic formula as the size of the determinant grows. The resulting formula involves the Grunsky operator built from the Grunsky coefficients of the exterior mapping function for $\gamma$. As a consequence of our formula we obtain the asymptotics of the partition function for the Coulomb gas on the curve. This formula involves the Fredholm determinant 
of the absolute value squared of the Grunsky operator which equals, up to a multiplicative constant, the Loewner energy of the curve. Based on this we obtain a new characterization of curves with finite Loewner energy called Weil-Petersson quasicircles.

\end{abstract}

\maketitle

\section{Introduction and results}\label{sect1}
\subsection{Definitions and results}\label{sect1.1}
Let $\gamma$ be a Jordan curve in the complex plane and $g:\gamma\mapsto\C$ a given function on the curve. Define the determinant 
\begin{equation}\label{Dneg}
D_n[e^g]=\det\left(\int_\gamma\zeta^j\bar{\zeta}^ke^{g(\zeta)}\,|d\zeta|\right)_{0\le j,k<n}
\end{equation}
assuming that all integrals exist. In the case $\gamma=\T$, the unit circle, this is $(2\pi)^n$ times the Toeplitz determinant with symbol $e^g$.
These determinants are related to orthogonal polynomials on the curve $\gamma$ when the weight function $w=e^g$ is positive, see \cite[Sec. 16.2]{Sz}. 
Note that our definition is different from the one in \cite{Sz}; the $D_n$ there is our $D_{n+1}/L^{n+1}$ where $L$ is the length of the curve. In the case
when $g=0$ these orthogonal polynomials were introduced by Szeg\H{o} in \cite{Sz2}, and some properties of the polynomials and the determinants, like \eqref{Dnquot} below, were investigated.
By Andrieff's identity, we have the integral formula
\begin{equation}\label{intfor}
D_n[e^g]=\frac 1{n!}\int_{\gamma^n}\prod_{1\le \mu\neq\nu\le n}|\zeta_\mu-\zeta_\nu|\prod_{\mu=1}^n e^{g(\zeta_\mu)}\prod_{\mu=1}^n|d\zeta_\nu|.
\end{equation}
Note that $D_n[1]$ is the \emph{partition function} for a planar Coulomb gas
on the curve $\gamma$,
\begin{equation}\label{partfcn}
Z_n(\gamma)=D_n[1]=\frac 1{n!}\int_{\gamma^n}\exp\left[-\sum_{1\le \mu\neq\nu\le n}\log|\zeta_\mu-\zeta_\nu|^{-1}\right]\prod_{\mu=1}^n|d\zeta_\nu|.
\end{equation}
In the case of Toeplitz determinants the strong Szeg\H{o} limit theorem gives a precise asymptotic formula for $D_n[e^g]/D_n[1]$ where $D_n[1]=Z_n(\T)=(2\pi)^n$, see \cite{DeKr}, \cite{Si} for background on, and proofs of, this theorem. In this case the partition function is easy to compute which is not the case for other curves. We want to generalize 
the strong Szeg\H{o} theorem to the case of a more general Jordan curve and also understand the asymptotics of the partition function.
Asymptotic properties of the determinant \eqref{Dneg} were studied in \cite{Sz2} and \cite[Sec. 6.2]{GrSz}, where the asymptotics for a quotient of consecutive determinants was given, see \eqref{Dnquot}. A strong Szeg\H{o} theorem for $D_n[e^g]/D_n[1]$ was proved in \cite[Sec. III]{Jo1}.
In this paper we prove a precise asymptotic formula for $D_n[e^g]$ under a somewhat weaker, but certainly not optimal, condition on the curve $\gamma$, and our
assumption on the function $g$ is optimal.
See Section \ref{sect1.2} below for further comments and background. The asymptotics of the partition function $Z_n(\gamma)$ turns out to be interesting and we will discuss its asymptotics under optimal conditions in Section \ref{sec:WPZn}. We will not prove any results for the $\beta$-ensemble corresponding to \eqref{intfor}, but we give a heuristic 
discussion in Subsection \ref{subsec:beta} and Section \ref{sec:beta}.

We can expect that the leading order asymptotics of $D_n[e^g]$ should be given by $\exp(-n^2V(\gamma))$, where $V(\gamma)$ is the \emph{logarithmic energy} of $\gamma$. The logarithmic energy is defined by 
\begin{equation*}
V(\gamma)=\inf_\mu\int_\gamma\int_\gamma\log|\zeta_1-\zeta_2|^{-1}\,d\mu(\zeta_1)d\mu(\zeta_2),
\end{equation*}
where the infimum is over all probability measures $\mu$ on $\gamma$. The \emph{logarithmic capacity} of $\gamma$ can be defined by $\text{\rm cap}(\gamma)=\exp(-V(\gamma))$.
Let $\Omega$ be the unbounded component of the complement of $\gamma$ and let $\text{\rm cap}(\gamma)\phi:\{z\,;\,|z|>1\}\mapsto\Omega$ be the exterior mapping function with the expansion,
\begin{equation}\label{phiexp}
\phi(z)=z+\phi_0+\phi_{-1}z^{-1}+\dots
\end{equation}
around infinity. If $|z|> 1, |\zeta|>1$, we have the expansion
\begin{equation}\label{logphiq}
\log\frac{\phi(\zeta)-\phi(z)}{\zeta-z}=-\sum_{k,\ell=1}^\infty a_{k\ell}\zeta^{-k}z^{-\ell},
\end{equation}
where $a_{k\ell}=a_{\ell k}\in\C$ are the \emph{Grunsky coefficients}, see e.g. \cite[Sec. 3.1]{Po}. If $\gamma$ is a quasicircle, i.e. it is the image of the unit circle under a quasiconformal mapping of the plane, there is a constant $\kappa<1$ such that
\begin{equation}\label{Grunsky1}
\sum_{k=1}^\infty \left|\sum_{\ell=1}^\infty \sqrt{k\ell}\,a_{k\ell}w_\ell\right|^2\le\kappa^2\sum_{k=1}^\infty |w_k|^2,
\end{equation}
and
\begin{equation}\label{Grunsky2}
\left|\sum_{k,\ell=1}^\infty\sqrt{k\ell}\,a_{k\ell}w_kw_\ell\right|\le\kappa\sum_{k=1}^\infty |w_k|^2,
\end{equation}
called the \emph{Grunsky inequalities}, see \cite[Sec. 9.4]{Po}.
Write
\begin{equation}\label{bkl}
b_{k\ell}=\sqrt{k\ell}\,a_{k\ell}=b_{k\ell}^{(1)}+\I b_{k\ell}^{(2)},
\end{equation}
where $b_{k\ell}^{(j)}\in\R$ and $\I=\sqrt{-1}$. Consider the \emph{Grunsky operator} $B$ and its real and imaginary parts,
\begin{equation}\label{Bop}
B=(b_{k\ell})_{k,\ell\ge 1}, \quad B^{(j)}=(b_{k\ell}^{(j)})_{k,\ell\ge 1}, j=1,2,
\end{equation}
which are bounded operators on $\ell^2(\C)$ by (\ref{Grunsky1}) with norm $\le\kappa<1$. Define the operator $K$ on $\ell^2(\C)\oplus\ell^2(\C)$, by
\begin{equation*}
K=\begin{pmatrix} B^{(1)} & B^{(2)} \\ B^{(2)} & -B^{(1)}  \end{pmatrix}.
\end{equation*}
Note that $K$ is real and symmetric. 

Expand the function $g(\phi(e^{\I\theta}))$ in a Fourier series,
\begin{equation}\label{gFour}
g(\phi(e^{\I\theta}))=\frac {a_0}2+\sum_{k=1}^\infty a_k\cos k\theta+b_k\sin k\theta,
\end{equation}
where $a_k,b_k\in\C$ and we assume that $g(\phi(e^{\I\theta}))$ is integrable. Define the infinite column vector in $\ell^2(\C)\oplus\ell^2(\C)$,
\begin{equation}\label{gcol}
\mathbf{g}=\begin{pmatrix} (\frac 12\sqrt{k}a_k)_{k\ge 1} \\ (\frac 12 \sqrt{k}b_k)_{k\ge 1} \end{pmatrix}.
\end{equation}
We can now state our main theorem which gives the asymptotics for the determinant \eqref{Dneg} as $n\to\infty$.

\begin{theorem}\label{thm:main}
Assume that the Jordan curve $\gamma$ is $C^{5+\alpha}$, $\alpha>0$, and that 
\begin{equation}\label{gcond}
\sum_{k=1}^\infty k(|a_k|^2+|b_k|^2)<\infty.
\end{equation}
We then have the asymptotic formula
\begin{equation}\label{Dnas}
D_n[e^g]=\frac{(2\pi)^n\text{\rm cap}(\gamma)^{n^2}}{\sqrt{\det(I+K)}}\exp\left(na_0/2+\mathbf{g}^t(I+K)^{-1}\mathbf{g}+o(1)\right),
\end{equation}
as $n\to\infty$.
\end{theorem}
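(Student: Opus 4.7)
The plan is to pass from the curve $\gamma$ to the unit circle via the exterior conformal map, use the Grunsky expansion to rewrite the log-interaction, and extract the asymptotics from a Gaussian-limit computation for the power sums of a Haar unitary matrix.

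First, I would substitute $\zeta_\mu = \text{cap}(\gamma)\phi(z_\mu)$ with $z_\mu = e^{\I\theta_\mu}$ into \eqref{intfor}. The Jacobian is $|d\zeta| = \text{cap}(\gamma)|\phi'(e^{\I\theta})|\,d\theta$, and the pair interaction transforms via the boundary form of \eqref{logphiq},
\[
\log|\phi(z_\mu) - \phi(z_\nu)| = \log|z_\mu - z_\nu| - \re\sum_{k,\ell \ge 1} a_{k\ell} z_\mu^{-k} z_\nu^{-\ell}.
\]
Letting $\zeta \to z$ in \eqref{logphiq} gives $\log \phi'(z) = -\sum_{k,\ell \ge 1} a_{k\ell} z^{-k-\ell}$, so the linear-in-$p_k$ diagonal correction from restricting the Grunsky double sum to $\mu \ne \nu$ is cancelled exactly by the Jacobian contribution $\sum_\mu \log|\phi'(z_\mu)|$. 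Combining this with \eqref{gFour} and the Weyl density of the CUE, I arrive at
\[
D_n[e^g] = (2\pi)^n\,\text{cap}(\gamma)^{n^2}\,e^{na_0/2}\,\E_{\mathrm{CUE}_n}\!\Big[\exp\Big(\sum_{k\ge 1}(a_k\re p_k + b_k\im p_k) - \re\sum_{k,\ell \ge 1} a_{k\ell} p_{-k} p_{-\ell}\Big)\Big],
\]
where $p_k = \sum_\mu z_\mu^k$ are the eigenvalue power sums.

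Second, I would invoke the Gaussian-limit theorem of Diaconis--Shahshahani for CUE power sums: as $n \to \infty$, $\{p_k/\sqrt{k}\}_{k\ge 1}$ converges jointly to independent standard complex Gaussians $\xi_k$. Writing the limit as $p_k \mapsto \sqrt{k/2}(u_k + \I v_k)$ with $(u_k,v_k)$ i.i.d.\ standard real Gaussians and setting $\mathbf{x} = (u_1,u_2,\ldots,v_1,v_2,\ldots)^t$, the linear exponent becomes $\sqrt{2}\,\mathbf{g}^t \mathbf{x}$ with $\mathbf{g}$ as in \eqref{gcol}, while $-\re\sum_{k,\ell}a_{k\ell}p_{-k}p_{-\ell} = -\re\sum_{k,\ell}b_{k\ell}\bar\xi_k\bar\xi_\ell$ simplifies to $-\tfrac12\mathbf{x}^t K \mathbf{x}$ with $K$ as in \eqref{Bop}. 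The Grunsky inequality \eqref{Grunsky1} yields $\|K\| \le \kappa < 1$, so $I + K$ is positive and standard Gaussian calculus gives
\[
\E_{\mathbf{x}\sim \mathcal{N}(0,I)}\!\big[e^{-\frac12\mathbf{x}^t K \mathbf{x} + \sqrt{2}\,\mathbf{g}^t \mathbf{x}}\big] = \frac{1}{\sqrt{\det(I+K)}}\exp\!\big(\mathbf{g}^t(I+K)^{-1}\mathbf{g}\big).
\]
Combining with the prefactors yields \eqref{Dnas}.

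The main obstacle is rigorously justifying the passage to the Gaussian limit \emph{inside} the exponential: since the quadratic functional $\re\sum a_{k\ell}p_{-k}p_{-\ell}$ is unbounded even under the Gaussian limit, convergence of finite-dimensional marginals is insufficient---one needs tightness of exponential moments uniformly in $n$. A feasible route is to truncate the Grunsky sums at level $N$, where the limit is a finite-dimensional Gaussian integral that can be computed exactly, and then to bound the tail uniformly in $n$; here one also needs uniform control of $\E_{\mathrm{CUE}_n}[|p_k|^{2m}]$, which can be obtained from the exact moment formulas together with the bound $\|K\|<1$. The regularity hypothesis $\gamma \in C^{5+\alpha}$ enters precisely at this step, giving enough polynomial decay of the Grunsky coefficients $a_{k\ell}$ to render the tail negligible, while the condition \eqref{gcond} is exactly what ensures $\mathbf{g} \in \ell^2(\C)\oplus\ell^2(\C)$ so that the right-hand side of \eqref{Dnas} is finite and the Gaussian integral against the linear statistic converges. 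A more algebraic alternative---an exact Borodin--Okounkov-type identity for $D_n[e^g]/D_n[1]$ as a Fredholm determinant involving $K$ and $\mathbf{g}$---would be attractive but appears significantly harder to establish for a general Jordan curve than for the unit circle.
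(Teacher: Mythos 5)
Your first step (mapping to the circle, cancelling the diagonal Grunsky terms against $\sum_\mu\log|\phi'|$, and arriving at a CUE expectation of $\exp$ of a quadratic-plus-linear functional of the power sums) is exactly the paper's reduction to \eqref{Dn1}, and your limiting Gaussian computation is precisely the heuristic the paper itself records in Section \ref{sec:beta} (at $\beta=2$). The issue is that everything after that — which you correctly identify as ``the main obstacle'' — is where the entire proof lives, and the route you sketch for it would not go through as stated. The needed uniform integrability cannot be extracted from uniform control of $\E_n[|p_k|^{2m}]$ for individual $k$: after diagonalizing, the positive part of the quadratic form is comparable to $\kappa\sum_{k\le N}|p_k|^2/k$ with $\kappa<1$, a \emph{joint} functional of all frequencies, and any H\"older-type splitting into single-$k$ exponential moments forces exponents $q_k\gtrsim N$, destroying the constant $\kappa<1$. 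What is actually needed is a dimension-free, uniform-in-$n$ bound for exponential moments of joint \emph{linear} statistics (Lemma \ref{lem:ub}, itself a consequence of strong Szeg\H{o}/Borodin--Okounkov), applied after a Hubbard--Stratonovich linearization of the quadratic form as in \eqref{Gmnzeta}--\eqref{Lmzeta}; the ``$\|K\|<1$'' input enters only through the Gaussian integrals, not through CUE moment formulas. Moreover the Grunsky tail $\sum_{k\vee\ell>m}$ (the term $W_m$ in \eqref{Wm}) must be controlled \emph{uniformly in $n$ before} any truncation limit is taken — this is where $C^{5+\alpha}$ and Lemma \ref{lem:tc} are used, via a second Gaussian linearization — whereas your plan truncates first and treats the tail as an afterthought.

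Two further ideas that your sketch does not anticipate, and without which the strategy stalls: (i) at the physical point the linearized exponent has \emph{complex} coefficients (the $\I\Lambda_m^{1/2}$ block), so the sharp bound of Lemma \ref{lem:ub} and Jensen's inequality are unavailable there; the paper circumvents this by introducing the auxiliary parameter $\zeta$ in \eqref{Gmnzeta}, proving uniform bounds for $|\zeta|\le\rho<1/\sqrt{\kappa}$ and convergence only for real $\zeta$ (Lemma \ref{lem:Gmnres}), and then recovering the value at $\zeta=\I$ by a normal-families/Vitali argument, using $|G_{m,n}(\zeta)|\le G_{m,n}(\re\zeta)$ from \eqref{Gmnineq}; (ii) a matching \emph{lower} bound does not come for free from convergence in distribution — the paper proves it separately by the change of variables $\theta_\mu\mapsto\theta_\mu-\tfrac1n h(\theta_\mu)$ with $h$ a truncated conjugate function, Jensen's inequality, and explicit estimates of the resulting terms $T_n^{(1)},T_n^{(2)},T_n^{(3)}$. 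So while your outline reproduces the correct formula and the correct first reduction, the analytic core — uniform joint exponential-moment control, tail control, the $\zeta$-continuation device, and the lower bound — is missing, and the substitute you propose (individual power-sum moments plus truncation) does not suffice.
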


The theorem will be proved in the next section. We see that the geometry of the curve enters via the operator $K$, and also directly via $\phi$ since we have the composition
of $g$ with $\phi$ in \eqref{gFour}. If $\gamma$ is the unit circle, we have $K=0$, $\text{\rm cap}(\gamma)=1$, and we get the usual strong Szeg\H{o} limit theorem. 
For the partition function (\ref{partfcn}) we obtain the following asymptotic formula,
\begin{equation}\label{partfcnformula}
Z_n(\gamma)=D_n[1]=\exp\left(n^2\log \text{\rm cap}(\gamma)+n\log 2\pi-\frac 12\log\det(I+K)+o(1)\right),
\end{equation}
as $n\to\infty$.

\subsection{Discussion}\label{sect1.2}
It was proved in \cite[Sec. 6.2]{GrSz} that if $\gamma$ is an analytic curve then
\begin{equation}\label{Dnquot}
\lim_{n\to\infty}\text{\rm cap}(\gamma)^{-2n-1}\frac{D_{n+1}[e^g]}{D_n[e^g]}=2\pi\exp\left(\frac 1{2\pi}\int_{-\pi}^\pi g(\phi(e^{\I\theta}))\,d\theta\right),
\end{equation}
which is a consequence of (\ref{Dnas}) above. In \cite[Sec. III]{Jo1} the following relative Szeg\H{o} type theorem was proved. If $g$ is $C^{1+\alpha}$ on 
$\gamma$ and $\gamma$ is $C^{10+\alpha}$ for some $\alpha>0$, then
\begin{equation}\label{Dnasold}
\frac{D_n[e^g]}{D_n[1]}=\exp[na_0/2+\mathbf{g}^t(I+K)^{-1}\mathbf{g}+o(1)]
\end{equation}
as $n\to\infty$. The expression for the constant term in the exponent in the right side was less clear in \cite{Jo1}, see Theorem 7.1 and its Corollary. The form give here
for the constant term is more elegant and satisfactory. A calculation shows that they are identical. We see that Theorem \ref{thm:main} is a strengthening of the earlier result. In 
particular it is not a relative Szeg\H{o} theorem since we do not divide by $D_n[1]$, and hence we also get asymptotics for the partition function as in \eqref{partfcnformula}.
The condition \eqref{gcond} in the theorem on $g$ is natural since $(I+K)^{-1}$ is an operator on $\ell^2(\C)$ we have to require that $\mathbf{g}\in\ell^2(\C)$ which is
exactly \eqref{gcond}. Thus the condition on $g$ is optimal. The condition in Theorem \ref{thm:main} that $\gamma$ is $C^{5+\alpha}$ is certainly not optimal although it is not immediately clear what the optimal condition is. If we consider the case when $g=0$ we can say more about the optimal condition on $\gamma$. This is the topic of the next subsection. At the end of that subsection we give a conjecture on the the optimal condition on $\gamma$ in the theorem.

If $\gamma$ has a cusp it is not a quasicircle and the Grunsky inequality \eqref{Grunsky1} no longer holds with $\kappa<1$. It would be interesting to see what the effect of a cusp is on the asymptotics of the determinant. Another question is to generalize the result to an arc instead of a Jordan curve. There are results in
case when $\gamma$ is an interval in $\R$ since in that case we get a Hankel determinant, see \cite{Hi}, \cite{Ge}, \cite{Jo1}. In the case of an arc on the unit circle there is
an asymptotic formula due to Widom in \cite{Wi}. See also \cite{Kr} when we have Fisher-Hartwig singularities, and \cite{DuKo} for a relative Szeg\H{o} theorem.

\subsection{Convergence of the partition function and Weil-Petersson quasicircles}\label{sec:WPZn}
Note that since $\det(I+K)=\det(I-B^*B)$, see \eqref{detid}, it follows that \eqref{partfcnformula} can be written as
\begin{equation}\label{logdetlimit}
\lim_{n\to\infty}\log\frac{Z_n(\gamma)/\text{\rm cap}(\gamma)^{n^2}}{Z_n(\T)/\text{\rm cap}(\T)^{n^2}}=
\lim_{n\to\infty}\log \frac{Z_n(\gamma)}{(2\pi)^n\text{\rm cap}(\gamma)^{n^2}}=-\frac 12\log\det(I-B^*B),
\end{equation}
since $\text{\rm cap}(\T)=1$.
Interestingly, the right side of \eqref{logdetlimit} has occurred in other contexts. In \cite{TT} it appears, up to a multiplicative constant, 
under the name universal Liouville action which is a K\"ahler potential for the Weil-Petersson metric on the $T_0(1)$ component of the universal Teichm\"uller space $T(1)$. It is also the so called \emph{Loewner energy} of the Jordan curve, see \cite{RoWa}, \cite{Wa}. Motivated in part by connections to the Schramm-Loewner Evolution, the Loewner energy has been further studied in \cite{ViWa1}, \cite{ViWa2}. Curves with the property that the Grunsky operator is a Hilbert-Schmidt operator are called \emph{Weil-Petersson quasicircles}. There are many different characterizations and possible definitions of Weil-Petersson quasicircles, see \cite{Bi} and \cite[Sect. 8]{Wa}. It is therefore natural to 
conjecture that \eqref{logdetlimit} holds if and only if $\gamma$ is a Weil-Petersson quasicircle. We will prove this but in order to state a theorem let us be a bit more precise.

Let $\gamma$ be a Jordan curve and let $\phi(z)$ be the exterior mapping function for $\gamma$ as above.
Set $\phi_r(z)=\frac 1r\phi(rz)$ for $|z|\ge\rho^{-1}$, where $1<\rho<r$, and let $\gamma_r$ be the image of $\T$ under $\phi_r$. Note that $\phi_r$ is an analytic curve so in particular 
$Z_n(\gamma_r)$ is well-defined. Define the function
\begin{equation}\label{Enr}
E_n(r)=\log\frac{Z_n(\gamma_r)}{(2\pi)^n\text{\rm cap}(\gamma)^{n^2}},
\end{equation}
which we informally can think of as a finite $n$ Loewner energy of $\gamma_r$. As $n\to\infty$ it converges to a multiple of the Loewner energy by \eqref{logdetlimit}.
The following lemma will be proved in Section \ref{sec:WPproof}.

\begin{lemma}\label{lem:Enrdecr}
The function $E_n(r)$ is decreasing in $(1,\infty)$ for every $n\ge 1$.
\end{lemma}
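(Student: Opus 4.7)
The plan is to rewrite $Z_n(\gamma_r)$ as a polyradial average of a plurisubharmonic function on a Reinhardt domain in $\C^n$ and then invoke the classical fact that such averages are convex functions of the logarithmic polyradii. Parametrising the integral in \eqref{partfcn} (for $\gamma=\gamma_r$) by $\zeta_i=\phi_r(u_i)=\phi(ru_i)/r$ and setting $z_i=r u_i\in r\T$ gives, after a direct computation,
\begin{equation*}
Z_n(\gamma_r) \;=\; \frac{(2\pi)^n}{n!}\,M_{\tilde F}(r,\ldots,r), \qquad M_{\tilde F}(\vec r) \;:=\; \frac{1}{(2\pi)^n}\int_{[0,2\pi]^n}\tilde F\bigl(r_1 e^{\I\theta_1},\ldots,r_n e^{\I\theta_n}\bigr)\,d\theta_1\cdots d\theta_n,
\end{equation*}
where $\tilde F(\vec z)\,:=\,\prod_i|z_i|^{-(n-1)}\cdot\prod_{i<j}|\phi(z_i)-\phi(z_j)|^2\,\prod_i|\phi'(z_i)|$. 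The factor $\prod_i|z_i|^{-(n-1)}$ is chosen precisely to cancel the $r^{n(n-1)}$ growth of the numerator on the diagonal torus $\{|z_i|=r\}$.

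The next step is to check that $\tilde F$ is plurisubharmonic on the Reinhardt domain $D:=\{\vec z\in\C^n:|z_i|>1\text{ for all }i\}$. Since $\phi$ is conformal on $\{|z|>1\}$, the function $\phi(z_i)-\phi(z_j)$ is holomorphic in $(z_i,z_j)$ and vanishes only on the diagonal, so $\log|\phi(z_i)-\phi(z_j)|$ is psh. Moreover $\phi'$ is nonvanishing on $\{|z|>1\}$, which is simply connected on the Riemann sphere, hence $\log\phi'$ admits a single-valued holomorphic branch with $\log\phi'(\infty)=0$; consequently $\log|\phi'(z_i)|=\re\log\phi'(z_i)$ is pluriharmonic. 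The term $\log|z_i|$ is likewise pluriharmonic on $D$. Thus $\log\tilde F$ is a sum of psh and pluriharmonic terms, hence psh, and $\tilde F=\exp(\log\tilde F)$ is psh.

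By the classical convexity theorem for polyradial averages of psh functions on Reinhardt domains, $(s_1,\ldots,s_n)\mapsto M_{\tilde F}(e^{s_1},\ldots,e^{s_n})$ is convex on $\{s_i>0\}^n$; restricting to the diagonal, $H(t):=M_{\tilde F}(e^t,\ldots,e^t)$ is convex in $t>0$. Using $\phi(z)=z+O(1)$ and $\phi'(z)\to 1$ at infinity together with dominated convergence, one computes
\begin{equation*}
\lim_{t\to\infty}H(t) \;=\; \frac{1}{(2\pi)^n}\int_{[0,2\pi]^n}\prod_{i<j}\bigl|e^{\I\theta_i}-e^{\I\theta_j}\bigr|^2\,d\theta_1\cdots d\theta_n \;=\; n!.
\end{equation*}
A convex function of $t$ with a finite limit at $+\infty$ is necessarily non-increasing, since its non-decreasing one-sided derivative, if ever positive, would force unbounded growth. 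Hence $H$ is non-increasing, and since $E_n(r)$ equals $\log H(\log r)$ up to an $r$-independent constant, the lemma follows.

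The conceptual step requiring insight is the choice of normalisation $\tilde F=F/\prod_i|z_i|^{n-1}$: this is the unique pluriharmonic correction that simultaneously preserves plurisubharmonicity and yields a bounded $H$ with finite limit at infinity — and it is precisely this combination of Reinhardt-convexity and finite limit that converts into monotonicity. The supporting technical point is the simple connectedness of $\{|z|>1\}$ on the Riemann sphere, without which $\log|\phi'|$ would be only subharmonic in each variable separately rather than fully pluriharmonic.
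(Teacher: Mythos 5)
Your proof is correct, and it reaches the crucial convexity input by a genuinely different route than the paper. The paper works directly with $E_n(r)$: it writes $Z_n(\gamma_r)$ as an integral of $e^{\re F(r,\alpha,\theta)}$ with a redundant rotation parameter $\alpha$, exploits the Cauchy--Riemann-type identity $\partial_\alpha F=\I r\partial_r F$ together with the vanishing of the $\alpha$-derivatives of the integral, and arrives at the differential inequality $rE_n''(r)+E_n'(r)\ge 0$ of Lemma \ref{lem:Enrconv} via a variance (sum-of-squares) computation; this is convexity of $E_n$ in $\log r$, which combined with $\lim_{r\to\infty}E_n(r)=0$ gives monotonicity by exactly the argument you use in your last step. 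You instead recognize the normalized integrand $\tilde F$ as the exponential of a plurisubharmonic function on the Reinhardt domain $\{|z_i|>1\}$ and invoke the classical fact that torus averages of psh functions are convex in the logarithmic radii; your pluriharmonic correction $\prod_i|z_i|^{-(n-1)}$ plays precisely the role of the $r^{-n(n-1)}$ prefactor in \eqref{Zngammarformula}, and your limit $H(t)\to n!$ is the same computation as \eqref{Enrlimit}. Two small remarks: first, you obtain convexity of the mean $H(t)$ itself, whereas the paper obtains the stronger statement that $\log H$ is convex; either suffices once the finite limit at $+\infty$ is known, so nothing is lost for the lemma. Second, your closing comment overstates what is needed: $\log|\phi'(z_i)|$ is plurisubharmonic on $\C^n$ simply as the log-modulus of a holomorphic function (no single-valued branch of $\log\phi'$ is required, although one does exist since $\phi'\to 1$ at infinity), and since this term enters with a positive coefficient, psh is all you need; only the term $-(n-1)\sum_i\log|z_i|$ genuinely requires pluriharmonicity, which is clear. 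In essence the two arguments encode the same structural fact --- the paper's identity $\partial_\alpha F=\I r\partial_r F$ is the infinitesimal form of the (pluri)harmonicity you use --- but yours imports a standard several-complex-variables theorem where the paper gives a self-contained calculus proof whose explicit inequality \eqref{Enrconv} is then reused; your version is shorter and more conceptual.
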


The sequence $E_n(r)$ is also increasing in $n$ for each fixed $r>1$. This is the content of Lemma \ref{lem:Enrincr} below.

The determinant in \eqref{Dneg} is not well-defined for a general Jordan curve. 
Therefore we define the $n$:th partition function for the Jordan curve $\gamma$ by
\begin{equation}\label{Zndef}
Z_n(\gamma)=\lim_{r\to 1+}Z_n(\gamma_r)
\end{equation}
with value in $\R\cup\{\infty\}$. The limit exists by Lemma \ref{lem:Enrdecr} possibly equal to infinity. We can now formulate our theorem which gives a new characterization of 
Weil-Petersson quasicircles and a way to compute their Loewner energy.

\begin{theorem}\label{thm:WP}
The Jordan curve $\gamma$ is a Weil-Petersson quasicircle if and only if
\begin{equation}\label{ZnWPbound}
\limsup_{n\to\infty}\frac{Z_n(\gamma)}{(2\pi)^n\text{\rm cap}(\gamma)^{n^2}}<\infty
\end{equation}
and in that case we have the limit \eqref{logdetlimit}.
\end{theorem}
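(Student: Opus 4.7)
The strategy is to approximate $\gamma$ by the analytic curves $\gamma_r$, apply Theorem \ref{thm:main} to each, and interchange the limits $n\to\infty$ and $r\to 1^+$ using Lemmas \ref{lem:Enrdecr} and \ref{lem:Enrincr}. From $\phi_r(z)=r^{-1}\phi(rz)$ and \eqref{logphiq} one reads off the Grunsky coefficients of $\gamma_r$ as $a_{k\ell}^{(r)}=a_{k\ell}r^{-k-\ell}$, so the associated Grunsky operator is $B_r=D_rBD_r$ with $D_r=\operatorname{diag}(r^{-k})_{k\ge 1}$. Since $\gamma_r$ is analytic (hence $C^{5+\alpha}$), Theorem \ref{thm:main} applied with $g=0$, combined with the identity $\det(I+K)=\det(I-B^*B)$ from \eqref{detid}, yields
\begin{equation*}
\lim_{n\to\infty}E_n(r)=L(r):=-\tfrac 12\log\det(I-B_r^*B_r).
\end{equation*}
Because $E_n(r)$ is increasing in $n$ (Lemma \ref{lem:Enrincr}) and decreasing in $r$ (Lemma \ref{lem:Enrdecr}), the double supremum can be taken in either order:
\begin{equation*}
\lim_{n\to\infty}E_n(\gamma)=\sup_n\sup_{r>1}E_n(r)=\sup_{r>1}\sup_n E_n(r)=\sup_{r>1}L(r).
\end{equation*}

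The next ingredient is that $L(r)$ is itself nonincreasing on $(1,\infty)$. Using $\tfrac{d}{dr}B_r=-\tfrac 1r(K_0B_r+B_rK_0)$ with $K_0=\operatorname{diag}(k)_{k\ge 1}$ and cyclicity of trace applied to the absolutely convergent series $\sum_{j\ge 1}\tfrac 1j\operatorname{tr}((B_r^*B_r)^j)$, one obtains
\begin{equation*}
\frac{d}{dr}\bigl[-\log\det(I-B_r^*B_r)\bigr]=-\frac{2}{r}\operatorname{tr}\!\Bigl(\bigl[B_r^*B_r(I-B_r^*B_r)^{-1}+B_rB_r^*(I-B_rB_r^*)^{-1}\bigr]K_0\Bigr)\le 0,
\end{equation*}
since the bracketed operators are positive and $K_0\ge 0$ on the diagonal. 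All manipulations are justified because $B_r^*B_r$ is trace class with rapidly decaying entries and $\|B_r\|<1$ for every $r>1$. Hence $\sup_{r>1}L(r)=\lim_{r\to 1^+}L(r)$.

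Both directions of the theorem are now immediate. If $\gamma$ is Weil-Petersson, $B$ is Hilbert-Schmidt with $\|B\|<1$; dominated convergence gives $\|B_r-B\|_{HS}\to 0$, and with the uniform bound $\|B_r\|\le\|B\|<1$ one has $B_r^*B_r\to B^*B$ in trace norm, so continuity of the Fredholm determinant yields $L(r)\to -\tfrac 12\log\det(I-B^*B)<\infty$; combined with the interchange of limits this proves both \eqref{ZnWPbound} and \eqref{logdetlimit}. Conversely, if \eqref{ZnWPbound} holds then $\sup_{r>1}L(r)<\infty$, and the elementary bound $-\log\det(I-T)\ge\operatorname{tr}T$ for positive trace-class $T<I$ gives
\begin{equation*}
L(r)\ge\tfrac 12\operatorname{tr}(B_r^*B_r)=\tfrac 12\sum_{k,\ell}|b_{k\ell}|^2r^{-2k-2\ell};
\end{equation*}
monotone convergence as $r\to 1^+$ then forces $\sum_{k,\ell}|b_{k\ell}|^2<\infty$, so $B$ is Hilbert-Schmidt and $\gamma$ is Weil-Petersson. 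The main technical obstacle is the monotonicity of $L$: one must justify differentiation of the log-determinant series term by term and interpret the trace against the unbounded diagonal operator $K_0$, both of which are routine in view of the rapid decay of the matrix entries of $B_r$ for each fixed $r>1$.
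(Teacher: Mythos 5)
Your proof is correct and takes essentially the same route as the paper: apply Theorem \ref{thm:main} (via \eqref{logdetlimit}) to the analytic approximations $\gamma_r$, interchange the limits $n\to\infty$ and $r\to 1^+$ using Lemmas \ref{lem:Enrdecr} and \ref{lem:Enrincr}, use the explicit coefficients $b_{k\ell}r^{-k-\ell}$ and Hilbert--Schmidt convergence $B_r\to B$ for the forward direction, and the bound $-\log\det(I-B_r^*B_r)\ge\Tr(B_r^*B_r)$ with $r\to 1^+$ for the converse. The only real divergence, your derivative/trace computation showing $L(r)$ is nonincreasing, is superfluous: $L(r)=\sup_n E_n(r)$ is automatically nonincreasing as a supremum of nonincreasing functions by Lemma \ref{lem:Enrdecr}.
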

In fact, by Lemma \ref{lem:Enrincr}, the sequence in \eqref{ZnWPbound} is increasing in $n$ so we could replace the upper limit with a proper limit.
The theorem is proved in Section \ref{sec:WPproof}. In view of this theorem it is reasonable to conjecture that the optimal condition on the curve $\gamma$ in theorem \ref{thm:main} is that $\gamma$ is a Weil-Petersson quasicircle.

\subsection{The $\beta$-ensemble}\label{subsec:beta}
Consider the $\beta$-ensemble corresponding to \eqref{intfor}, i.e. consider
\begin{equation*}
D_{n,\beta}[e^g]=\frac 1{n!}\int_{\gamma^n}\prod_{1\le \mu\neq\nu\le n}|\zeta_\mu-\zeta_\nu|^{\beta/2}\prod_{\mu=1}^n e^{g(\zeta_\mu)}\prod_{\mu=1}^n|d\zeta_\nu|,
\end{equation*}
where $\beta>0$. This is not a determinant when $\beta\neq 2$ but is the quantity analogous to \eqref{Dneg}. The corresponding partition function is $Z_{n,\beta}(\gamma)=D_{n,\beta}[1]$. If $\gamma=\T$, then
\begin{equation*}
Z_{n,\beta}(\T)=\frac{(2\pi)^n}{n!}\frac{\Gamma(1+\beta n/2)}{\Gamma(1+\beta/2)^n}.
\end{equation*}
The expansion (\ref{logphiq}) gives
\begin{equation}\label{logphiprime}
\log\phi'(z)=-\sum_{k=2}^\infty\left(\sum_{j=1}^{k-1} a_{j,k-j}\right)z^{-k}.
\end{equation}
Define
\begin{equation*}
\mathbf{g}_{\beta}=(\beta/2-1)\mathbf{d}+\mathbf{g},
\end{equation*}
where
\begin{equation*}
\mathbf{d}=\begin{pmatrix} \left(\frac 12\sqrt{k}\re\left(\sum_{j=1}^{k-1}a_{j,k-j}\right)\right)_{k\ge 1} \\ \left(\frac 12\sqrt{k}\im\left(\sum_{j=1}^{k-1}a_{j,k-j}\right)\right)_{k\ge 1}
\end{pmatrix}
\end{equation*}
comes from $\log|\phi'(z)|$.
Here the $k=1$ component is $=0$, compare with \eqref{logphiprime}.
We \emph{conjecture} that, if $a_0=0$, then
\begin{equation}\label{betalimit}
\lim_{n\to\infty}\frac{D_{n,\beta}[e^g]}{Z_{n,\beta}(\T)\text{\rm cap}(\gamma)^{\beta n(n-1)/2+n}}=\frac 1{\sqrt{\det(I+K)}}
\exp\left(\frac 2{\beta}\mathbf{g}_{\beta}^t(I+K)^{-1}\mathbf{g}_{\beta}\right).
\end{equation}
We will give a heuristic argument for this in Section \ref{sec:beta}. If we let $\beta=2$ in this argument it also gives the idea behind the proof of \eqref{Dnas} given in the paper, although making it precise is more complicated. If the conjecture is correct, we see that the appearance of the Fredholm determinant $\det(I+K)=\det(I-B^*B)$ is not
related to $\beta=2$. The proof of \eqref{Dnasold} in \cite{Jo1} does not use the fact that $\beta=2$ in an essential way, so by slightly modifying that proof it should be possible
to show that
\begin{equation*}
\lim_{n\to\infty}\frac{D_{n,\beta}[e^g]}{D_{n,\beta}[1]}=\exp\left(\frac 2{\beta}\mathbf{g}^t(I+K)^{-1}\mathbf{g}+2(1-\frac 2{\beta})\mathbf{d}^t(I+K)^{-1}\mathbf{g}\right),
\end{equation*}
under the assumptions in \cite{Jo1}.

\subsection*{Acknowledgements} I thank Fredrik Viklund for many useful comments on the paper and interesting discussions, and for pointing out the monotonicity in $r$ of the
Loewner energy for $\gamma_r$ which motivated Lemma \ref{lem:Enrdecr}.
Thanks also to Klara Courteaut and Gaultier Lambert for comments on the exposition.

\section{Proof of the main theorem}\label{sect2}
In this section we will prove theorem \ref{thm:main}. An essential ingredient is Lemma \ref{lem:Gmnres} which is proved in the next section.
Assume that $\gamma$ is a $C^{5+\alpha}$-curve for some $\alpha>0$. Then, by a theorem of Kellogg, 
see e.g. \cite[Thm. II 4.3]{GM}, the map $\phi$ can be extended to $|z|\ge 1$ in such a way that $\phi$ is $C^{5+\alpha}$ on $\T$, and $\phi'(z)\neq 0$ when $|z|\ge 1$. Clearly, the expansion \eqref{logphiq} then holds for $|z|, |\zeta|\ge 1$. 
It follows from (\ref{intfor}), by introducing the parametrization $\zeta_\mu=\text{\rm cap}(\gamma)\phi(e^{\I\theta_\mu})$, that
\begin{equation}\label{parintfor}
D_n[e^g]=\frac{ \text{\rm cap}(\gamma)^{n^2}}{n!}\int_{[-\pi,\pi]^n}\exp\left[\sum_{\mu\neq\nu}\log|\phi(e^{\I\theta_\mu})-\phi(e^{\I\theta_\nu})|+\sum_\mu\log|\phi'(e^{\I\theta_\mu})|
+g(\phi(e^{\I\theta_\mu}))\right]\,d\theta.
\end{equation}
Combining \eqref{logphiprime} with (\ref{logphiq}) leads to the identity
\begin{align}\label{logfor}
&\sum_{\mu\neq\nu}\log|\phi(e^{\I\theta_\mu})-\phi(e^{\I\theta_\nu})|+\sum_\mu\log|\phi'(e^{\I\theta_\mu})|\\
&=\sum_{\mu\neq\nu}\log|e^{\I\theta_\mu}-e^{\I\theta_\nu}|-\re\sum_{k,\ell=1}^\infty a_{k\ell}\left(\sum_\mu e^{-\I k\theta_\mu}\right)\left(\sum_\nu e^{-\I\ell\theta_\nu}\right).\notag
\end{align}
Let
\begin{equation}\label{Enexp}
\E_n[(\cdot)]=\frac 1{(2\pi)^nn!}\int_{[-\pi,\pi]^n}\exp\bigg(\sum_{\mu\neq\nu}\log|e^{\I\theta_\mu}-e^{\I\theta_\nu}|\bigg)(\cdot)\,d\theta,
\end{equation}
be the expectation over the eigenvalues $e^{\I\theta_\mu}$ of a random unitary matrix with respect to normalized Haar measure, \cite{Mec}.
It follows from (\ref{parintfor}), (\ref{logfor}) and (\ref{Enexp}) that
\begin{equation}\label{Dn1}
D_n[e^g]=(2\pi)^n\text{\rm cap}(\gamma)^{n^2}\E_n\left[\exp\left(-\re\sum_{k,\ell=1}^\infty a_{k\ell}\left(\sum_\mu e^{-\I k\theta_\mu}\right)\left(\sum_\nu e^{-\I\ell\theta_\nu}\right)+
\sum_\mu g(\phi(e^{\I\theta_\mu}))\right)\right].
\end{equation}

To proceed we will need some linear algebra. Let $b_{k\ell}$ be given by \eqref{bkl} and define
\begin{equation}\label{Bm}
B_m=(b_{k\ell})_{1\le k,\ell\le m}=B_m^{(1)}+\I B_m^{(2)},
\end{equation}
so that $B_m^{(1)}$ and $B_m^{(2)}$ are real symmetric $m$ times $m$ matrices.  Since $B_m$ is a complex, symmetric matrix there is a unitary matrix $U_m=R_m+\I S_m$, with $R_m$ and $S_m$ real, such that 
\begin{equation}\label{Bmu}
B_m=U_m\Lambda_m U_m^t,
\end{equation}
where $\Lambda_m=\text{\rm diag}(\lambda_{m,1},\dots,\lambda_{m,m})$ and $\lambda_{m,k}$, $1\le k\le m$, are the singular values of $B_m$, \cite[Sec. 4.4]{HoJo}. Define the real, symmetric $2m$ by $2m$ matrix
\begin{equation*}
K_m=\begin{pmatrix} B_m^{(1)} & B_m^{(2)} \\ B_m^{(2)} & -B_m^{(1)} \end{pmatrix},
\end{equation*}
and the matrices
\begin{equation*}
T_m=\begin{pmatrix} R_m & S_m \\ S_m & -R_m\end{pmatrix}, \quad 
\tilde{\Lambda}_m=\begin{pmatrix} \Lambda_m & 0 \\ 0 & -\Lambda_m \end{pmatrix}.
\end{equation*}

\begin{lemma}\label{lem:linalg}
The matrix $T_m$ is orthogonal and
\begin{equation}\label{KTlambda}
K_m=T_m\tilde{\Lambda}_mT_m^t,
\end{equation}
so that $K_m$ has eigenvalues $\pm\lambda_{m,k}$.  These eigenvalues satisfy
\begin{equation}\label{eigenineq}
|\lambda_{m,k}|\le\kappa <1,
\end{equation}
where $\kappa$ is the constant in the Grunsky inequality (\ref{Grunsky2}). Furthermore, if $\mathbf{x}=(x_j)_{1\le j\le m}$, $\mathbf{y}=(y_j)_{1\le j\le m}$ are real column vectors , then
\begin{equation}\label{Rebkl}
\re\sum_{k,\ell=1}^m b_{k\ell}(x_k-\I y_k)(x_\ell-\I y_\ell)=\begin{pmatrix} \mathbf{x} \\ \mathbf{y} \end{pmatrix}^t K_m \begin{pmatrix} \mathbf{x} \\ \mathbf{y} \end{pmatrix}.
\end{equation}
\end{lemma}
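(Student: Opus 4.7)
The plan is to verify the four statements in the order given by direct block-matrix computation, with the singular-value bound being the only place where the analytic hypothesis enters. Everything is driven by the factorization $B_m = U_m \Lambda_m U_m^t$ together with the real/imaginary decomposition $U_m = R_m + \I S_m$.

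First I would prove that $T_m$ is orthogonal. The unitarity $U_m^* U_m = I_m$, after separating real and imaginary parts, is equivalent to the two real identities $R_m^t R_m + S_m^t S_m = I_m$ and $R_m^t S_m = S_m^t R_m$. A block computation of $T_m^t T_m$ reduces to precisely these identities and yields $I_{2m}$. Next, expanding $B_m = (R_m + \I S_m)\Lambda_m(R_m^t + \I S_m^t)$ and separating real and imaginary parts gives $B_m^{(1)} = R_m \Lambda_m R_m^t - S_m \Lambda_m S_m^t$ and $B_m^{(2)} = R_m \Lambda_m S_m^t + S_m \Lambda_m R_m^t$. A block expansion of $T_m \tilde{\Lambda}_m T_m^t$ then reproduces these entries in the appropriate positions and establishes \eqref{KTlambda}; orthogonality of $T_m$ together with the diagonal form of $\tilde{\Lambda}_m$ identifies the spectrum of $K_m$ as $\{\pm \lambda_{m,k}\}$.

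For the bound \eqref{eigenineq}, since the $\lambda_{m,k}$ are the singular values of $B_m$, it suffices to prove $\|B_m w\| \le \kappa \|w\|$ for every $w\in\C^m$. Given such a $w$, extend it to $\tilde{w}\in \ell^2(\C)$ by zeros; then $(B_m w)_k = (B \tilde{w})_k$ for $1 \le k \le m$, so the Grunsky inequality \eqref{Grunsky1} applied to $\tilde w$ gives
\[
\|B_m w\|^2 \le \|B \tilde{w}\|^2 \le \kappa^2 \|\tilde{w}\|^2 = \kappa^2 \|w\|^2,
\]
which is the desired bound.

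Finally, for \eqref{Rebkl}, I would expand $(x_k - \I y_k)(x_\ell - \I y_\ell) = x_k x_\ell - y_k y_\ell - \I(x_k y_\ell + y_k x_\ell)$, multiply by $b_{k\ell} = b_{k\ell}^{(1)} + \I b_{k\ell}^{(2)}$ and take real parts; the symmetry of $B_m^{(1)}$ and $B_m^{(2)}$ (inherited from $a_{k\ell} = a_{\ell k}$) pairs the cross terms and yields $\mathbf{x}^t B_m^{(1)} \mathbf{x} - \mathbf{y}^t B_m^{(1)} \mathbf{y} + 2\mathbf{x}^t B_m^{(2)} \mathbf{y}$, which is precisely the block expansion of the right side of \eqref{Rebkl}. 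There is no real obstacle here: the lemma is bookkeeping in $2\times 2$ blocks, and the only mild subtlety is the transfer of the Grunsky bound from the full operator $B$ to its truncation $B_m$ via zero-padding, which is where the quasicircle assumption ($\kappa<1$) is actually used.
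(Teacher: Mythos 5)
Your proof is correct. For the orthogonality of $T_m$, the factorization \eqref{KTlambda}, and the identity \eqref{Rebkl}, you follow exactly the paper's route: separate $U_m^*U_m=I$ into the two real identities, expand $B_m=(R_m+\I S_m)\Lambda_m(R_m+\I S_m)^t$ to read off $B_m^{(1)}$, $B_m^{(2)}$, and compute the quadratic form directly. The one place you genuinely diverge is the eigenvalue bound \eqref{eigenineq}. You zero-pad $w\in\C^m$ to $\tilde w\in\ell^2(\C)$ and invoke the operator-norm form \eqref{Grunsky1} to get $\|B_m w\|\le\|B\tilde w\|\le\kappa\|w\|$, hence $\lambda_{m,k}\le\kappa$; the paper instead applies the bilinear-form version \eqref{Grunsky2} to the identity \eqref{Rebkl} it just proved, obtaining $\bigl|(\mathbf{x}^t,\mathbf{y}^t)K_m(\mathbf{x}^t,\mathbf{y}^t)^t\bigr|\le\kappa(\|\mathbf{x}\|^2+\|\mathbf{y}\|^2)$ and reading off the spectral bound for the real symmetric matrix $K_m$. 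Both routes are valid and of comparable length (the $\kappa$ in \eqref{Grunsky1} and \eqref{Grunsky2} coincides for a complex symmetric $B$); yours has the advantage of bounding $B_m$'s singular values at the source, while the paper's is more self-contained in that it works entirely within the finite-dimensional picture already set up and reuses \eqref{Rebkl} rather than passing back to the infinite operator.
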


\begin{proof}
That $T_m$ is orthogonal follows from $U_mU_m^*=I$. The identities (\ref{Bm}), (\ref{Bmu}) and $U_m=R_m+\I S_m$ give
\begin{align*}
B_m^{(1)}&=R_m\Lambda_m R_m^t-S_m\Lambda_m S_m^t,\\
B_m^{(2)}&=R_m\Lambda_m S_m^t+S_m\Lambda_m R_m^t,
\end{align*}
which translates into (\ref{KTlambda}). We also see that
\begin{align*}
\re\sum_{k,\ell=1}^m b_{k\ell}(x_k-\I y_k)(x_\ell-\I y_\ell)&=\sum_{k,\ell=1}^mb_{k\ell}^{(1)}(x_kx_\ell-y_ky_\ell)+b_{k\ell}^{(2)}(x_ky_\ell+y_kx_\ell)\\
&=\begin{pmatrix} \mathbf{x} \\ \mathbf{y} \end{pmatrix}^t \begin{pmatrix} B_m^{(1)} & B_m^{(2)} \\ B_m^{(2)} & -B_m^{(1)} \end{pmatrix}
 \begin{pmatrix} \mathbf{x} \\ \mathbf{y} \end{pmatrix},
 \end{align*}
 which proves (\ref{Rebkl}). It follows from (\ref{Grunsky2}) and (\ref{Rebkl}) that
 \begin{equation*}
 \left|\begin{pmatrix} \mathbf{x} \\ \mathbf{y} \end{pmatrix}^t K_m \begin{pmatrix} \mathbf{x} \\ \mathbf{y} \end{pmatrix}\right|
 \le\kappa \begin{pmatrix} \mathbf{x} \\ \mathbf{y} \end{pmatrix}^t \begin{pmatrix} \mathbf{x} \\ \mathbf{y} \end{pmatrix},
 \end{equation*}
 which shows that all the eigenvalues of $K_m$ have absolute value $\le\kappa$.

\end{proof}

Let
\begin{equation*}
\mathbf{X}=\left(\frac 1{\sqrt{k}}\sum_\mu \cos k\theta_\mu\right)_{k\ge 1},\quad \mathbf{Y}=\left(\frac 1{\sqrt{k}}\sum_\mu \sin k\theta_\mu\right)_{k\ge 1},
\end{equation*}
be infinite column vectors, and let $P_m$ denote projection onto the first $m$ components. It follows from (\ref{KTlambda}) and (\ref{Rebkl}) that
\begin{equation}\label{Reakl}
-\re\sum_{k,\ell=1}^ma_{k\ell}\left(\sum_\mu e^{-\I k\theta_\mu}\right)\left(\sum_\nu e^{-\I \ell\theta_\nu}\right)=
\begin{pmatrix}P_m\mathbf{X} \\P_m\mathbf{Y} \end{pmatrix}^tT_m \begin{pmatrix} -\Lambda_m & 0\\0 & \Lambda_m \end{pmatrix}T_m^t
\begin{pmatrix}P_m\mathbf{X} \\P_m\mathbf{Y} \end{pmatrix}.
\end{equation}
Define, for $\zeta\in\C$,
\begin{equation}\label{Mmzeta}
M_m(\zeta)=\begin{pmatrix} \zeta\Lambda_m^{1/2} & 0\\0 & \Lambda_m^{1/2} \end{pmatrix}T_m^t
\begin{pmatrix}P_m\mathbf{X} \\P_m\mathbf{Y} \end{pmatrix},
\end{equation}
so that
\begin{equation}\label{ReMM}
-\re\sum_{k,\ell=1}^ma_{k\ell}\left(\sum_\mu e^{-\I k\theta_\mu}\right)\left(\sum_\nu e^{-\I \ell\theta_\nu}\right)=M_m(\I)^tM_m(\I).
\end{equation}
Without loss of generality we can assume that $a_0=0$ in (\ref{gFour}) by just subtracting off the mean. For $\zeta\in\C$, we define
\begin{equation}\label{akzeta}
a_k(\zeta)=\frac 12\sqrt{k}(\re a_k+\zeta\im a_k), \quad b_k(\zeta)=\frac 12\sqrt{k}(\re b_k+\zeta\im b_k),
\end{equation}
and the infinite column vectors
\begin{equation*}
\mathbf{a}(\zeta)=(a_k(\zeta))_{k\ge 1}, \quad \mathbf{b}(\zeta)=(b_k(\zeta))_{k\ge 1},
\end{equation*}
which lie in $\ell^2(\C)$ by the assumption (\ref{gcond}). Set
\begin{equation*}
\mathbf{g}(\zeta)=\begin{pmatrix} \mathbf{a}(\zeta)\\\mathbf{b}(\zeta)\end{pmatrix},
\end{equation*}
so that $\mathbf{g}(\I)=\mathbf{g}$ given by \eqref{gcol}.
We see that \eqref{gFour} can be written
\begin{equation}\label{gXY}
\sum_\mu g(\phi(e^{\I\theta_\mu}))=2\mathbf{g}(\I)^t\begin{pmatrix}\mathbf{X} \\\mathbf{Y} \end{pmatrix}.
\end{equation}
Define
\begin{equation}\label{wm}
w_m(\theta_1,\theta_2)=\re\sum_{k\vee\ell>m}a_{k\ell}e^{-\I k\theta_1-\I \ell\theta_2},
\end{equation}
and
\begin{equation}\label{Wm}
W_m(\theta)=-\sum_{\mu,\nu}w_m(\theta_\mu,\theta_\nu).
\end{equation}
Combining (\ref{ReMM}), (\ref{gXY}), (\ref{wm}) and (\ref{Wm}), we obtain the identity
\begin{equation*}
-\re\sum_{k,\ell=1}^\infty a_{k\ell}\left(\sum_\mu e^{-\I k\theta_\mu}\right)\left(\sum_\nu e^{-\I \ell\theta_\nu}\right)+\sum_{\mu}g(\phi(e^{\I \theta_\mu}))=
M_m(\I)^tM_m(\I)+2\mathbf{g}(\I)^t\begin{pmatrix}\mathbf{X} \\\mathbf{Y} \end{pmatrix}+W_m(\theta),
\end{equation*}
for every $m\ge 1$.
Using this identity in (\ref{Dn1}) leads us to define the entire function
\begin{equation}\label{Gmnzeta}
G_{m,n}(\zeta)=\E_n[\exp(M_m(\zeta)^tM_m(\zeta)+2\mathbf{g}(\zeta)\begin{pmatrix}\mathbf{X} \\\mathbf{Y} \end{pmatrix}+W_m(\theta))],
\end{equation}
for $\zeta\in\C$, so that, for any $m\ge 1$,
\begin{equation}\label{Dn2}
D_n(e^g)=(2\pi)^n\text{\rm cap}(\gamma)^{n^2}G_{m,n}(\I).
\end{equation}
Note that $G_{m,n}(\I)$ is independent of $m$, but of course for other $\zeta$ the function $G_{m,n}(\zeta)$ does depend on $m$.

The expression $M_m(\zeta)^tM_m(\zeta)$ is a quadratic form in $\mathbf{X}$ and $\mathbf{Y}$ and we want instead to have a linear form in
$\mathbf{X}$ and $\mathbf{Y}$. This can be achieved by using a Gaussian integral, an idea that was also used in \cite[Sec. 6.5]{JoLa}.
Let $\mathbf{u}=(u_k)_{1\le k\le m}$, $\mathbf{v}=(v_k)_{1\le k\le m}$ be real column vectors. Then,
\begin{equation*}
\exp(M_m(\zeta)^tM_m(\zeta))=\frac 1{\pi^m}\int_{\R^m}du\int_{\R^m}dv\exp\left(-\begin{pmatrix} \mathbf{u} \\ \mathbf{v}\end{pmatrix}^t\begin{pmatrix} \mathbf{u} \\ \mathbf{v}\end{pmatrix}
+2 \begin{pmatrix} \mathbf{u} \\ \mathbf{v}\end{pmatrix}^tM_m(\zeta)\right)
\end{equation*}
and Fubini's theorem in (\ref{Gmnzeta}) to get the formula
\begin{equation}\label{Gmnzeta2}
G_{m,n}(\zeta)=\frac 1{\pi^m}\int_{\R^m}du\int_{\R^m}dv\exp(-\begin{pmatrix} \mathbf{u} \\ \mathbf{v}\end{pmatrix}^t\begin{pmatrix} \mathbf{u} \\ \mathbf{v}\end{pmatrix})
\E_n[\exp(2 \begin{pmatrix} \mathbf{u} \\ \mathbf{v}\end{pmatrix}^tM_m(\zeta)+2\mathbf{g}(\zeta)\begin{pmatrix}\mathbf{X} \\\mathbf{Y} \end{pmatrix}+W_m(\theta))].
\end{equation}
From the definition (\ref{Mmzeta}) we see that
\begin{equation*}
\begin{pmatrix} \mathbf{u} \\ \mathbf{v}\end{pmatrix}^tM_m(\zeta)=L_m(\zeta)^t\begin{pmatrix}\mathbf{X} \\\mathbf{Y} \end{pmatrix},
\end{equation*}
where
\begin{equation}\label{Lmzeta}
L_m(\zeta)=\begin{pmatrix} P_m & 0 \\ 0 & P_m \end{pmatrix}^tT_m\begin{pmatrix} \zeta\Lambda_m^{1/2} & 0 \\ 0  & \Lambda_m^{1/2} \end{pmatrix}
\begin{pmatrix} \mathbf{u} \\ \mathbf{v}\end{pmatrix}.
\end{equation}
Thus,
\begin{equation}\label{Gmnzeta3}
G_{m,n}(\zeta)=\frac 1{\pi^m}\int_{\R^m}du\int_{\R^m}dv\exp(-\begin{pmatrix} \mathbf{u} \\ \mathbf{v}\end{pmatrix}^t\begin{pmatrix} \mathbf{u} \\ \mathbf{v}\end{pmatrix})
\E_n[\exp(2(L_m(\zeta)+\mathbf{g}(\zeta))^t\begin{pmatrix}\mathbf{X} \\\mathbf{Y} \end{pmatrix}+W_m(\theta))].
\end{equation}
Note that by the definitions
\begin{equation}\label{Gmnineq}
|G_{m,n}(\zeta)|\le G_{m,n}(\re\zeta).
\end{equation}
We will now state a lemma that will allow us to prove the theorem. The proof of the lemma will be given in the next section.
Define the function
\begin{equation}\label{fzetalambda}
f_\zeta(\lambda)=\begin{cases} (1-\zeta^2\lambda)^{-1}\zeta^2\lambda, &\lambda\ge 0 \\ -(1+\lambda)^{-1}\lambda, &\lambda<0 \end{cases}
\end{equation}
on the real line. We can use spectral calculus to define $f_\zeta(K)$. Recall that $K$ is a symmetric trace class operator with spectrum in $[-\kappa,\kappa]$. Define
\begin{equation}\label{Gzeta}
G(\zeta)=\frac 1{\sqrt{\det(I-\zeta^2|B|)\det(I-|B|)}}\exp\left(\mathbf{g}(\zeta)^t(I+f_\zeta(K))\mathbf{g}(\zeta)\right),
\end{equation}
which is holomorphic in $|\zeta|<\kappa$. Note that $B$ is a trace-class operator by Lemma \ref{lem:tc} below.

\begin{lemma}\label{lem:Gmnres}
Let $G_{m,n}(\zeta)$ be defined by (\ref{Gmnzeta}). Then if $\rho\in(1,1/\sqrt{\kappa})$ there is a constant $C$ so that
\begin{equation}\label{Gmnest}
|G_{m,n}(\zeta)|\le C
\end{equation}
for all $|\zeta|\le\rho$, $n\ge 1$ and $m$ sufficiently large.
Also, if $\zeta$ is real and $|\zeta|\le\rho$, then
\begin{equation}\label{Gmnlimit}
\lim_{m\to\infty}\lim_{n\to\infty}G_{m,n}(\zeta)=G(\zeta).
\end{equation}
\end{lemma}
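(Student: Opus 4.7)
Working from the Gaussian-integral representation (\ref{Gmnzeta3}), my strategy is to take $n\to\infty$ inside the $(\mathbf{u},\mathbf{v})$-integral using the strong Szeg\H{o} theorem on $\T$ applied to the now-linear exponent, and then $m\to\infty$ to eliminate the tail $W_m$. The uniform bound (\ref{Gmnest}) comes from dominating the $(\mathbf{u},\mathbf{v})$-integrand by an explicit Gaussian whose quadratic form stays positive definite for $|\zeta|\le\rho$.

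\emph{Uniform bound.} By (\ref{Gmnineq}) it suffices to consider real $\zeta\in[-\rho,\rho]$. Then the argument of $\E_n$ in (\ref{Gmnzeta3}) is $\sum_\mu h_{m,\zeta,\mathbf{u},\mathbf{v}}(\theta_\mu)+W_m(\theta)$ with a real trigonometric polynomial $h=h_{m,\zeta,\mathbf{u},\mathbf{v}}$. Combining the uniform CUE exponential moment bound $\E_n[\exp(\sum_\mu h(\theta_\mu))]\le C\exp(\|h\|_{H^{1/2}}^2)$ with the identity $L_m(\zeta)^tL_m(\zeta)=\zeta^2\mathbf{u}^t\Lambda_m\mathbf{u}+\mathbf{v}^t\Lambda_m\mathbf{v}$ (which follows from the orthogonality of $T_m$), and bounding $e^{W_m}$ in $L^2(\E_n)$ by Cauchy--Schwarz together with the decay of $a_{k\ell}$ under $\gamma\in C^{5+\alpha}$, the $(\mathbf{u},\mathbf{v})$-integrand is dominated by a Gaussian of effective covariance $(I-\zeta^2\Lambda_m)\oplus(I-\Lambda_m)$ with a linear source coming from $\mathbf{g}(\zeta)$. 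By (\ref{eigenineq}), $\Lambda_m\le\kappa I$, and since $\rho^2\kappa<1$ this quadratic form is uniformly positive definite, yielding $|G_{m,n}(\zeta)|\le C$ independent of $n\ge 1$ and $m$ large.

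\emph{Convergence.} For real $\zeta$ with $|\zeta|\le\rho$, I would pass to $n\to\infty$ inside (\ref{Gmnzeta3}) by dominated convergence. The strong Szeg\H{o} theorem on $\T$ gives
$$\lim_{n\to\infty}\E_n[\exp(\sum_\mu h_{m,\zeta,\mathbf{u},\mathbf{v}}(\theta_\mu))]=\exp\bigl((L_m(\zeta)+\mathbf{g}(\zeta))^t(L_m(\zeta)+\mathbf{g}(\zeta))\bigr),$$
while the $e^{W_m}$ factor contributes $1+o(1)$ as $m\to\infty$ uniformly in $n$, using $\E_n|\sum_\mu e^{-ik\theta_\mu}|^2\le k$ and $\sum_{k\vee\ell>m}\sqrt{k\ell}\,|a_{k\ell}|\to 0$ under $C^{5+\alpha}$. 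Completing the square in the resulting $(\mathbf{u},\mathbf{v})$-Gaussian produces the prefactor $[\det(I-\zeta^2\Lambda_m)\det(I-\Lambda_m)]^{-1/2}$ and the bilinear form $\mathbf{g}(\zeta)^t[I+T_mf_\zeta(\tilde\Lambda_m)T_m^t]\mathbf{g}(\zeta)=\mathbf{g}(\zeta)^t[I+f_\zeta(K_m)]\mathbf{g}(\zeta)$, the last equality from (\ref{KTlambda}), (\ref{fzetalambda}), and the spectral calculus. Letting $m\to\infty$ and using that $B$ is trace class (Lemma \ref{lem:tc}), so that $K_m\to K$ and the determinants and quadratic form converge to their $|B|$- and $K$-versions, recovers $G(\zeta)$ as defined in (\ref{Gzeta}).

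\emph{Main obstacle.} The most delicate step is the uniform-in-$n$ control of the $W_m$ factor as $m\to\infty$. I would expand $e^{W_m}-1$ in powers of the tail moments $\sum_\mu e^{-ik\theta_\mu}$ with $k>m$ and use the explicit CUE correlation functions to bound each term by the decay rate of the Grunsky coefficients $a_{k\ell}$; the hypothesis $\gamma\in C^{5+\alpha}$ enters precisely to ensure sufficient decay for this estimate to close.
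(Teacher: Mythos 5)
Your uniform bound \eqref{Gmnest} is essentially the paper's argument: reduce to real $\zeta$ by \eqref{Gmnineq}, dominate $W_m$ by a small quadratic form using the decay of the Grunsky coefficients, keep the effective Gaussian covariance $(I-\zeta^2\Lambda_m)\oplus(I-\Lambda_m)$ positive definite because $\rho^2\kappa<1$, and invoke the $n$-uniform CUE exponential-moment bound (Lemma \ref{lem:ub}). The genuine gap is in the convergence part. For \emph{fixed} $m$ the tail term $W_m$ does not disappear in the $n\to\infty$ limit: the power sums $\sum_\mu e^{-\I k\theta_\mu}$ with $k>m$ have variance $\min(k,n)$ and converge jointly to nondegenerate Gaussians, so $\lim_{n\to\infty}\E_n[\exp(2(L_m(\zeta)+\mathbf{g}(\zeta))^t\begin{pmatrix}\mathbf{X}\\ \mathbf{Y}\end{pmatrix}+W_m)]$ is \emph{not} $\exp((L_m+\mathbf{g})^t(L_m+\mathbf{g}))$; the $W_m$ factor survives for fixed $m$ and only becomes negligible after $m\to\infty$. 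Your justification that it contributes $1+o(1)$ uniformly in $n$ rests on $\E_n|\sum_\mu e^{-\I k\theta_\mu}|^2\le k$ and $\sum_{k\vee\ell>m}\sqrt{k\ell}\,|a_{k\ell}|\to 0$, i.e. a first-moment bound on $W_m$. But what must be controlled is $\E_n[(e^{W_m}-1)\,e^{\text{linear statistic}}]$, an exponential functional; moment bounds on $W_m$ do not control it, and pointwise $|W_m|$ can be of order $n^2\sum_{k\vee\ell>m}|a_{k\ell}|$, so no dominated-convergence argument closes without an $n$-uniform exponential-moment estimate for the quadratic tail jointly with the linear statistic. That estimate is precisely the hard technical content, and it is what the paper's second Gaussian linearization (the $p,q$-integral over the dominating quadratic form, followed by Lemma \ref{lem:ub}) delivers; your $L^2$/Cauchy--Schwarz step for $e^{W_m}$ has the same unaddressed issue, since bounding $\E_n[e^{2W_m}]$ uniformly in $n$ is again a quadratic-exponential estimate.

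Even granting uniform smallness of the $W_m$ effect, your argument only yields an upper bound together with an unproved interchange of limits; the paper proves \eqref{Gmnlimit} by a two-sided sandwich, and the lower bound uses a mechanism absent from your proposal: restrict the $(\mathbf{u},\mathbf{v})$-integration to a box $[-D,D]^m$, change variables $\theta_\mu\mapsto\theta_\mu-\tfrac 1n h(\theta_\mu)$ with $h$ a cutoff of the conjugate function of the symbol $f$ of the linear statistic, apply Jensen's inequality, and evaluate the resulting expectations with the explicit CUE one- and two-point functions, showing the tail contributions $T_n^{(2)},T_n^{(3)}$ are $O(1/n)$. This measure change is what recovers the full $\sum_{k\le m}k|f_k|^2$ in the exponent (naive Jensen gives only $\E_n[\sum_\mu f(\theta_\mu)]=0$, far too weak). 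Your suggested alternative of expanding $e^{W_m}-1$ in powers of tail power sums and using CUE correlation functions could conceivably be made to work, but it would require uniform high-order joint moment bounds and is not carried out; as written, the proof of \eqref{Gmnlimit} is incomplete.
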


Assume Lemma \ref{lem:Gmnres}. It follows from (\ref{Gmnest}) that $\{G_{m,n}(\zeta)\}$ is a normal family in $|\zeta|<\rho$. Let
$H_n=G_{m,n}(\I)$, which is independent of $m$. By (\ref{Gmnest}), $|H_n|\le C$ for all $n\ge 1$. Let $\{H_{n_i}\}$ be any convergent subsequence. If we can show that
\begin{equation}\label{Hnilimit}
\lim_{i\to\infty}H_{n_i}=G(\I),
\end{equation}
we are done since,
\begin{equation*}
I+f_{\I}(K)=I-(I+K)^{-1}K=(I+K)^{-1}.
\end {equation*}
Also,
\begin{equation}\label{detid}
\det(I+|B|)\det(I-|B|)=\det(I-|B|^2)=\prod_{j=1}^\infty(1-\lambda_j^2)=\det(I+K),
\end{equation}
where $\lambda_j$ are the singular values of $B$.
The fact that  $\{G_{m,n}(\zeta)\}$ is a normal family and a diagonal argument shows that there is a subsequence $\{n_{i,i}\}$ of $\{n_i\}$ such that
\begin{equation*}
\lim_{i\to\infty}G_{m,n_{i,i}}(\zeta)=:G_m(\zeta) 
\end{equation*}
uniformly in $|\zeta|\le\rho'<\rho$ for each sufficiently large $m$. By (\ref{Gmnest}), $\{G_m(\zeta)\}$ is a normal family in $|\zeta| <\rho$. Let $\{m_j\}$ be any sequence such that
$G_{m_j}(\zeta)$ converges uniformly in $|\zeta|\le\rho'$, where $1<\rho'<\rho$. Then, for $\zeta$ real, $|\zeta|\le\rho'$,
\begin{equation*}
\lim_{j\to\infty}G_{m_j}(\zeta)=\lim_{j\to\infty}\lim_{i\to\infty}G_{m_j,n_{i,i}}(\zeta)=G(\zeta),
\end{equation*}
by (\ref{Gmnlimit}). Since $\{m_j\}$ was arbitrary, we see that $\lim_{m\to\infty}G_m(\zeta)=G(\zeta)$ uniformly for $\zeta\in\C$, $|\zeta|\le\rho'$. For any fixed m,
\begin{equation*}
\lim_{i\to\infty}H_{n_i}=\lim_{i\to\infty}H_{n_{i,i}}=\lim_{i\to\infty}G_{m,n_{i,i}}(\I)=G_m(\I). 
\end{equation*}
We can now let $m\to\infty$ to get (\ref{Hnilimit}). This completes the proof of
the theorem.

\section{Proof of Lemma \ref{lem:Gmnres}}\label{sect3}
We start by giving a technical lemma that we will use below.
\begin{lemma}\label{lem:tc}
Assume that $\gamma$ is a $C^{5+\alpha}$ curve for some $\alpha>0$ so that the extended exterior mapping function $\phi$ is $C^{5+\alpha}$ on $\T$. Let the operator
$B$ on $\ell^2(\C)$ be defined by \eqref{Bop}. Then $B$ is a trace class operator. Also, if $\delta_m$ is defined by
\begin{equation}\label{deltam}
\delta_m=\left(\sum_{k\vee\ell>m}(k\ell)^{2+\epsilon}|b_{k\ell}|^2\right)^{1/2},
\end{equation}
we have that $\delta_m\to 0$ as $m\to\infty$.
Furthermore there is a constant $C$ so that
\begin{equation}\label{aklsumbound}
\sum_{k\vee\ell>m}(k^2+\ell^2)|a_{k\ell}|\le C
\end{equation}
for all $m\ge 1$.
\end {lemma}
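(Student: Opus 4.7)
My approach is to deduce all three assertions from Fourier decay of the Grunsky coefficients, which in turn follows from regularity of
\[
F(\psi,\theta):=\log\frac{\phi(e^{\I\psi})-\phi(e^{\I\theta})}{e^{\I\psi}-e^{\I\theta}}
\]
on $\T^2$. By Kellogg's theorem (recalled at the start of Section \ref{sect2}), the expansion \eqref{logphiq} extends up to $|z|=|\zeta|=1$, so the Fourier coefficients of $F$ on $\T^2$ are exactly $-a_{k\ell}$ for $k,\ell\ge 1$ and vanish otherwise.

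The key step is to show $F\in C^{4+\alpha}(\T^2)$. Away from the diagonal $\psi=\theta$ this is immediate. Near the diagonal, writing $\tilde\phi(\theta):=\phi(e^{\I\theta})\in C^{5+\alpha}$ and using a path parametrization on $\T$, one has
\[
\frac{\tilde\phi(\psi)-\tilde\phi(\theta)}{\psi-\theta}=\int_0^1\tilde\phi'\bigl((1-t)\theta+t\psi\bigr)\,dt,
\]
and differentiating under the integral $p$ times in $\psi$ and $q$ times in $\theta$ yields $\int_0^1 t^p(1-t)^q\tilde\phi^{(p+q+1)}((1-t)\theta+t\psi)\,dt$, which is continuous for $p+q\le 4$ and $\alpha$-H\"older for $p+q=4$. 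Multiplying by the analytic, nonvanishing factor $(\psi-\theta)/(e^{\I\psi}-e^{\I\theta})$, one concludes that the divided difference $(\phi(e^{\I\psi})-\phi(e^{\I\theta}))/(e^{\I\psi}-e^{\I\theta})$ lies in $C^{4+\alpha}(\T^2)$ and is nonvanishing (since $\phi$ is injective on $\T$ and $\phi'\ne 0$). A global branch of the logarithm is fixed by the normalization at infinity in \eqref{logphiq}, giving $F\in C^{4+\alpha}(\T^2)$. Parseval applied to $\partial_\psi^p\partial_\theta^q F$ then yields
\[
\sum_{k,\ell\ge 1}k^{2p}\ell^{2q}|a_{k\ell}|^2\le\|\partial_\psi^p\partial_\theta^q F\|_{L^2(\T^2)}^2<\infty\qquad(p+q\le 4).
\]

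The three assertions now follow by Cauchy--Schwarz. For trace class, $\|B\|_1\le\sum_{k,\ell}|b_{k\ell}|$ and
\[
\sum_{k,\ell}\sqrt{k\ell}\,|a_{k\ell}|\le\Bigl(\sum_{k,\ell}(k\ell)^{-1-\epsilon}\Bigr)^{1/2}\Bigl(\sum_{k,\ell}(k\ell)^{2+\epsilon}|a_{k\ell}|^2\Bigr)^{1/2},
\]
where $(k\ell)^{2+\epsilon}\le\tfrac12(k^{4+2\epsilon}+\ell^{4+2\epsilon})$ reduces the second factor to Parseval bounds with derivative order $\le 4$ for $\epsilon$ small. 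The analogous device with $(k\ell)^{3+\epsilon}\le\tfrac12(k^{6+2\epsilon}+\ell^{6+2\epsilon})$ shows $\sum_{k,\ell}(k\ell)^{3+\epsilon}|a_{k\ell}|^2<\infty$ for $\epsilon$ sufficiently small, so $\delta_m^2=\sum_{k\vee\ell>m}(k\ell)^{3+\epsilon}|a_{k\ell}|^2$ is the tail of a convergent series and tends to zero. Finally
\[
\sum_{k,\ell}k^2|a_{k\ell}|\le\Bigl(\sum_{k,\ell}\frac{1}{k^2\ell^2}\Bigr)^{1/2}\Bigl(\sum_{k,\ell}k^6\ell^2|a_{k\ell}|^2\Bigr)^{1/2},
\]
the second factor being Parseval with $(p,q)=(3,1)$; the $\ell^2$ contribution is symmetric, and the full sum dominates the tail over $k\vee\ell>m$ uniformly in $m$, giving \eqref{aklsumbound}.

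The main obstacle is the regularity analysis of $F$ across the diagonal $\psi=\theta$. Without care, the division by $\zeta-z$ threatens to lose too many derivatives; the integral representation of the divided difference is precisely what lets us retain the $C^{4+\alpha}$ joint smoothness needed for all three Cauchy--Schwarz estimates above.
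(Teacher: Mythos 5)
Your proof is correct, and it takes a genuinely different route from the paper's. The paper's own proof starts from pointwise decay estimates on the individual Grunsky coefficients, $|a_{k\ell}|\le Ck^{-2-\epsilon}\ell^{-2-\epsilon}$ and $|a_{k\ell}|\le Ck^{-3-\epsilon}\ell^{-1-\epsilon}$ (stated as following ``from the definition of the Grunsky coefficients'' given $\phi\in C^{5+\alpha}$), and then obtains trace class by factoring $B=DKD$ with $K=(k^{1+\epsilon/2}\ell^{1+\epsilon/2}a_{k\ell})$ and the diagonal $D=(k^{-1/2-\epsilon/2}\delta_{k\ell})$ both Hilbert--Schmidt; the bounds \eqref{deltam} and \eqref{aklsumbound} are then read off directly from the two pointwise estimates and the symmetry $a_{k\ell}=a_{\ell k}$. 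You instead work at the $L^2$ level: you prove that the two-variable function $\log\bigl[(\phi(e^{\I\psi})-\phi(e^{\I\theta}))/(e^{\I\psi}-e^{\I\theta})\bigr]$ is $C^{4+\alpha}$ on $\T^2$ via the divided-difference integral representation, extract the weighted Bessel/Parseval bounds $\sum k^{2p}\ell^{2q}|a_{k\ell}|^2<\infty$ for $p+q\le 4$, and then get all three conclusions by Cauchy--Schwarz together with the crude but valid entrywise bound $\|B\|_1\le\sum_{k,\ell}|b_{k\ell}|$. What your approach buys is that the regularity input is made explicit and proved (the paper leaves the passage from $\phi\in C^{5+\alpha}$ to coefficient decay implicit), and the summed $L^2$ bounds you need are weaker than the paper's pointwise ones; what the paper's approach buys is sharper per-coefficient information and a more economical trace-class argument via the Hilbert--Schmidt factorization rather than the entrywise $\ell^1$ bound. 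Two cosmetic remarks: the restriction ``$\epsilon$ small'' is not really needed in your Cauchy--Schwarz steps, since for $k\ge 1$ one simply has $k^{4+2\epsilon}\le k^8$ and $k^{6+2\epsilon}\le k^8$, both covered by the $(p,q)=(4,0)$ Parseval bound; and for the identification of the Fourier coefficients you only need that the $(-k,-\ell)$ coefficient of $F$ equals $-a_{k\ell}$ (Bessel then suffices), so the vanishing of the remaining coefficients, while true, is not essential to your inequalities.
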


\begin{proof}
Since $\phi$ is $C^{5+\alpha}$ it follows from the definition of the Grunsky coefficients that there is a constant $C$ such that
\begin{equation}\label{aklest1}
|a_{k\ell}|\le \frac C{k^{2+\epsilon}\ell^{2+\epsilon}},
\end{equation}
\begin{equation}\label{aklest2}
|a_{k\ell}|\le \frac C{k^{3+\epsilon}\ell^{1+\epsilon}},
\end{equation}
for some $\epsilon>0$. Consider the operators given by $K=(k^{1+\epsilon/2}\ell^{1+\epsilon/2}a_{k\ell})_{k,\ell\ge 1}$ and $D=(k^{-1/2-\epsilon/2}\delta_{k\ell})_{k,\ell\ge 1}$.
Then $K$ and $D$ are Hilbert-Schmidt operators, and since $B=DKD$ we see that $B$ is a trace class operator.

We see from \eqref{bkl}, \eqref{deltam}, and \eqref{aklest1} that
\begin{equation*}
\delta_m^2=\sum_{k\vee\ell>m}k^{2+\epsilon}\ell^{2+\epsilon}|b_{k\ell}|^2\le\sum_{k\vee\ell>m}\frac{C}{k^{1+\epsilon}\ell^{1+\epsilon}},
\end{equation*}
which $\to 0$ as $m\to\infty$. Also, since $a_{k\ell}$ is symmetric \eqref{aklest2} gives the estimate
\begin{equation*}
\sum_{k\vee\ell>m}(k^2+\ell^2)|a_{k\ell}|=2\sum_{k\vee\ell>m}k^2|a_{k\ell}|\le C
\end{equation*}
for all $m\ge 1$.
\end{proof}

We turn now to the proof of the estimate (\ref{Gmnest}). This proof will also give us an upper bound in (\ref{Gmnlimit}). After that we will prove a lower bound in (\ref{Gmnlimit}) which
will coincide with the upper bound and hence prove the limit. First, we need an estimate of $W_m(\theta)$ defined by (\ref{Wm}). Note that
\begin{equation*}
W_m(\theta)\le\sum_{k\vee\ell>m}|b_{k\ell}||X_k-\I Y_k||X_\ell-\I Y_\ell|=
\lim_{M\to\infty}\sum_{\substack{k\vee\ell>m \\ k\wedge \ell\le M}}|b_{k\ell}||X_k-\I Y_k||X_\ell-\I Y_\ell|.
\end{equation*}
Let $\epsilon>0$ be fixed. By the Cauchy-Schwarz' inequality
\begin{align}\label{Wmest}
&\sum_{\substack{k\vee\ell>m \\ k\wedge \ell\le M}}|b_{k\ell}||X_k-\I Y_k||X_\ell-\I Y_\ell|\\
&\le \left(\sum_{\substack{k\vee\ell>m \\ k\wedge \ell\le M}}(k\ell)^{2+\epsilon}|b_{k\ell}|^2\right)^{1/2}\left(\sum_{\substack{k\vee\ell>m \\ k\wedge \ell\le M}}
\frac 1{k^{2+\epsilon}}|X_k-\I Y_k|^2\frac 1{\ell^{2+\epsilon}}|X_\ell-\I Y_\ell|^2\right)^{1/2}\notag\\
&\le\delta_m\left(\sum_{k=1}^M \frac 1{k^{2+\epsilon}}(X_k^2+Y_k^2)\right).\notag
\end{align}
We know from Lemma \ref{lem:tc} that our assumptions on $\gamma$ imply that $\delta_m\to 0$ as $m\to\infty$. 
Define the $2M$ times $2M$ matrix $D_{m,M}$ by
\begin{equation*}
D_{m,M}^{-1}=\begin{pmatrix} \text{diag\,}(\frac {\delta_m}{k^{2+\epsilon}})_{1\le k\le M}  & 0\\ 0  &  \text{diag\,}(\frac {\delta_m}{k^{2+\epsilon}})_{1\le k\le M}\end{pmatrix}.
\end{equation*}
Because of the inequality (\ref{Gmnineq}) we can assume that $\zeta\in\R$ which we will do from now on.
We see from (\ref{Gmnzeta3}), Fatou's lemma and (\ref{Wmest}) that 
\begin{align}\label{Gmnest2}
G_{m,n}(\zeta)&\le\frac 1{\pi^m}\int_{\R^m}du\int_{\R^m}dv\exp(-\begin{pmatrix} \mathbf{u} \\ \mathbf{v}\end{pmatrix}^t\begin{pmatrix} \mathbf{u} \\ \mathbf{v}\end{pmatrix})\\
&\times\varliminf_{M\to\infty}\E_n\left[\exp\left(2(L_m(\zeta)^t+\mathbf{g}(\zeta))^t\begin{pmatrix} \mathbf{X}\\\mathbf{Y}\end{pmatrix}+
\sum_{\substack{k\vee\ell>m \\ k\wedge \ell\le M}}|b_{k\ell}||X_k-\I Y_k||X_\ell-\I Y_\ell|\right)\right]\notag\\
&\le \frac 1{\pi^m}\int_{\R^m}du\int_{\R^m}dv\exp(-\begin{pmatrix} \mathbf{u} \\ \mathbf{v}\end{pmatrix}^t\begin{pmatrix} \mathbf{u} \\ \mathbf{v}\end{pmatrix})\notag\\
&\times\varliminf_{M\to\infty}\E_n\left[\exp\left(2(L_m(\zeta)^t+\mathbf{g}(\zeta))^t\begin{pmatrix} \mathbf{X}\\\mathbf{Y}\end{pmatrix}+
\begin{pmatrix} P_M\mathbf{X}\\P_M\mathbf{Y}\end{pmatrix}^tD_{m,M}^{-1}\begin{pmatrix} P_M\mathbf{X}\\P_M\mathbf{Y}\end{pmatrix}\right)\right].\notag
\end{align}
We now use the Gaussian integral
\begin{align*}
\exp\left(\begin{pmatrix} P_M\mathbf{X}\\P_M\mathbf{Y}\end{pmatrix}^tD_{m,M}^{-1}\begin{pmatrix} P_M\mathbf{X}\\P_M\mathbf{Y}\end{pmatrix}\right)
&=\frac 1{\pi^M}\left(\prod_{k=1}^M\frac {k^{2+\epsilon}}{\delta_m}\right)\int_{\R^M}dp\int_{\R^M}dq\exp\left(-\begin{pmatrix} \mathbf{p} \\ \mathbf{q}\end{pmatrix}^t D_{m,M}\begin{pmatrix} \mathbf{p} \\ \mathbf{q}\end{pmatrix}\right.\\
&\left.+2\begin{pmatrix} P_M^t\mathbf{p} \\ P_M^t\mathbf{q}\end{pmatrix}^t\begin{pmatrix} \mathbf{X}\\ \mathbf{Y}\end{pmatrix}\right),
\end{align*}
where $\mathbf{p}$ and $\mathbf{q}$ are column vectors in $\R^M$.
If we use this identity in (\ref{Gmnest2}), we obtain the estimate
\begin{align}\label{Gmnest3}
&G_{m,n}(\zeta)\le\frac 1{\pi^{m+M}}\left(\prod_{k=1}^M\frac {k^{2+\epsilon}}{\delta_m}\right)\int_{\R^M}dp\int_{\R^M}dq\int_{\R^m}du\int_{\R^m}dv\\
&\times\varliminf_{M\to\infty}
\exp\left(-\begin{pmatrix} \mathbf{u} \\ \mathbf{v}\end{pmatrix}^t\begin{pmatrix} \mathbf{u} \\ \mathbf{v}\end{pmatrix}
-\begin{pmatrix} \mathbf{p} \\ \mathbf{q}\end{pmatrix}^t D_{m,M}\begin{pmatrix} \mathbf{p} \\ \mathbf{q}\end{pmatrix}\right)
\E_n\left[\exp\left(2\left(L_m(\zeta)+\mathbf{g}(\zeta)+\begin{pmatrix} P_M^t\mathbf{p} \\ P_M^t\mathbf{q}\end{pmatrix}\right)^t\begin{pmatrix} \mathbf{X}\\ \mathbf{Y}\end{pmatrix}\right)\right].\notag
\end{align}
We will now make use of the following upper bound which is a consequence of the strong Szeg\H{o} limit theorem, \cite[p. 268]{Jo1}, or the Geronimo-Case-Borodin-Okounkov identity, \cite[Lemma 2.3]{JoLa}. 
\begin{lemma}\label{lem:ub}
We have the estimate
\begin{equation}\label{ub}
\E_n\left[\exp\left(2\begin{pmatrix} \mathbf{c}\\ \mathbf{d}\end{pmatrix}^t\begin{pmatrix} \mathbf{X}\\ \mathbf{Y}\end{pmatrix}\right)\right]
\le \exp\left(\begin{pmatrix} \mathbf{c}\\ \mathbf{d}\end{pmatrix}^t\begin{pmatrix} \mathbf{c}\\ \mathbf{d}\end{pmatrix}\right),
\end{equation}
for  infinite column vectors $\mathbf{c}=(c_k)_{k\ge 1}$, $\mathbf{d}=(d_k)_{k\ge 1}$ in $\ell^2(\R)$.
\end{lemma}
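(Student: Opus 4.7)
The plan is to interpret the exponent in \eqref{ub} as a linear statistic of the eigenvalues and reduce the inequality to a sharp finite-$n$ upper bound on a Toeplitz determinant. Set
\begin{equation*}
f(\theta) = 2\sum_{k=1}^\infty \frac{c_k}{\sqrt k}\cos k\theta + 2\sum_{k=1}^\infty \frac{d_k}{\sqrt k}\sin k\theta,
\end{equation*}
which is a real $L^2(\T)$ function since $\mathbf c,\mathbf d\in\ell^2(\R)$, with Fourier coefficients $\hat f(0)=0$ and $\hat f(\pm k)=(c_k\mp\I d_k)/\sqrt k$ for $k\ge 1$. By construction $\sum_\mu f(\theta_\mu) = 2(\mathbf c^t,\mathbf d^t)(\mathbf X^t,\mathbf Y^t)^t$, and a direct calculation gives
\begin{equation*}
\sum_{k=1}^\infty k\,\hat f(k)\hat f(-k)=\sum_{k=1}^\infty(c_k^2+d_k^2)=\begin{pmatrix}\mathbf c\\\mathbf d\end{pmatrix}^t\begin{pmatrix}\mathbf c\\\mathbf d\end{pmatrix},
\end{equation*}
which is exactly the exponent on the right-hand side of \eqref{ub}. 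Heine's identity then rewrites the expectation in \eqref{ub} as the Toeplitz determinant $\det T_n[e^f]$, so the lemma is equivalent to the sharp finite-$n$ bound $\det T_n[e^f]\le\exp\bigl(\sum_{k\ge 1}k\hat f(k)\hat f(-k)\bigr)$ for every real $f\in L^2(\T)$ with zero mean and $\sum k|\hat f(k)|^2<\infty$.

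To establish this sharp bound I would use the Geronimo--Case--Borodin--Okounkov representation, as stated in \cite[Lemma 2.3]{JoLa}, which writes
\begin{equation*}
\det T_n[e^f]=\exp\Bigl(\sum_{k\ge 1}k\hat f(k)\hat f(-k)\Bigr)\det(I-Q_n A Q_n),
\end{equation*}
where $Q_n$ is the projection onto indices $\ge n$ and $A$ is a trace-class operator built explicitly from the Wiener--Hopf factors $e^{f_\pm}$ of $e^f$. For real $f$ one has $f_- = \overline{f_+}$, so $A = HH^*$ for an explicit Hankel operator $H$, and the accompanying Szeg\H o-type estimate (equivalently, the upper-bound argument on \cite[p.~268]{Jo1}) forces $\|Q_n A Q_n\|<1$. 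Hence $Q_n A Q_n\ge 0$ with spectrum in $[0,1)$, so $\det(I-Q_n A Q_n)\le 1$, and the required inequality follows for every $n\ge 1$.

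The delicate step is the positivity-and-norm-bound assertion $A=HH^*$, $\|Q_n A Q_n\|<1$, for real $f$ merely in the energy space $\sum k|\hat f(k)|^2<\infty$. This is where the reality of $f$ enters critically, and it is precisely the content of the cited lemma in \cite{JoLa} (equivalently of the Szeg\H o upper bound in \cite{Jo1}). Once these facts are accepted, the proof of Lemma \ref{lem:ub} reduces to the one-line determinantal estimate above, so in practice the argument amounts to quoting one of the two references.
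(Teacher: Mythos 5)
Your proof is correct and follows the Geronimo--Case--Borodin--Okounkov route, which is precisely one of the two references the paper cites for this lemma (the other being the upper-bound argument on p.~268 of \cite{Jo1}). One small simplification: for real $f$ the symbol $b=e^{f_--f_+}$ has $|b|=1$ a.e., so Nehari already gives $\|A\|=\|H(b)H(b)^*\|\le 1$, hence $0\le Q_nAQ_n\le I$ and $\det(I-Q_nAQ_n)\le 1$ directly, without needing the strict norm bound $\|Q_nAQ_n\|<1$.
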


The estimate (\ref{ub}) gives
\begin{align}\label{Enub}
&\E_n\left[\exp\left(2\left(L_m(\zeta)+\mathbf{g}(\zeta)+\begin{pmatrix} P_M^t\mathbf{p} \\ P_M^t\mathbf{q}\end{pmatrix}\right)^t\begin{pmatrix} \mathbf{X}\\ \mathbf{Y}\end{pmatrix}\right)\right]\\
&\le \exp\left(\left(L_m(\zeta)+\mathbf{g}(\zeta)+\begin{pmatrix} P_M^t\mathbf{p} \\ P_M^t\mathbf{q}\end{pmatrix}\right)^t\left(L_m(\zeta)+\mathbf{g}(\zeta)+\begin{pmatrix} P_M^t\mathbf{p} \\ P_M^t\mathbf{q}\end{pmatrix}\right)\right)\notag\\
&=\exp\left(\left(L_m(\zeta)+\mathbf{g}(\zeta)
 \right)^t\left(L_m(\zeta)+\mathbf{g}(\zeta)\right)
+2 \left(L_m(\zeta)+\mathbf{g}(\zeta) \right)^t\begin{pmatrix} P_M^t\mathbf{p} \\ P_M^t\mathbf{q}\end{pmatrix}
+\begin{pmatrix} \mathbf{p} \\ \mathbf{q}\end{pmatrix}^t\begin{pmatrix} \mathbf{p} \\ \mathbf{q}\end{pmatrix}\right).\notag
\end{align}
Inserting this into (\ref{Gmnest3}), the $pq$-integral becomes
\begin{align}\label{pqint}
&\frac 1{\pi^{M}}\left(\prod_{k=1}^M\frac {k^{2+\epsilon}}{\delta_m}\right)\int_{\R^M}dp\int_{\R^M}dq\exp\left(-\begin{pmatrix} \mathbf{p} \\ \mathbf{q}\end{pmatrix}^t(D_{m,M}-I)
\begin{pmatrix} \mathbf{p} \\ \mathbf{q}\end{pmatrix}\right.\\ 
&\left.+2\left(L_m(\zeta)+\mathbf{g}(\zeta) \right)^t
\begin{pmatrix} P_M & 0 \\ 0 & P_M\end{pmatrix}^t\begin{pmatrix} \mathbf{p} \\ \mathbf{q}\end{pmatrix}\right)\notag\\
&=\left(\prod_{k=1}^M\frac{k^{2+\epsilon}/\delta_m}{k^{2+\epsilon}/\delta_m-1}\right)
\exp\left(\left(L_m(\zeta)+\mathbf{g}(\zeta) \right)^t
\begin{pmatrix} P_M & 0 \\ 0 & P_M\end{pmatrix}^t(D_{m,M}-I)^{-1}\begin{pmatrix} P_M & 0 \\ 0 & P_M\end{pmatrix}\left(L_m(\zeta)+\mathbf{g}(\zeta) \right)\right)\notag\\
&\le\prod_{k=1}^M\frac{1}{1-\delta_m/k^{2+\epsilon}}\exp\left(\frac{\delta_m}{1-\delta_m}\left(L_m(\zeta)+\mathbf{g}(\zeta) \right)^t
\left(L_m(\zeta)+\mathbf{g}(\zeta) \right)\right),\notag
\end{align}
where the last inequality follows from the fact that all entries in $(D_{m,M}-I)^{-1}$ are $\le\delta_m(1-\delta_m)^{-1}$.
If we assume that $m$ is so large that $\delta_m\le 1/2$ then there is a constant $C_\epsilon$ so that
\begin{equation*}
\prod_{k=1}^M\frac{1}{1-\delta_m/k^{2+\epsilon}}\le e^{C_\epsilon\delta_m}.
\end{equation*}
Thus, (\ref{Gmnest3}), (\ref{Enub}) and \eqref{pqint} give
\begin{equation*}
G_{m,n}(\zeta)\le \frac {e^{C_\epsilon\delta_m}}{\pi^m}\int_{\R^m}du\int_{\R^m}dv\exp\left(-\begin{pmatrix} \mathbf{u} \\ \mathbf{v}\end{pmatrix}^t\begin{pmatrix} \mathbf{u} \\ \mathbf{v}\end{pmatrix}+\frac{1}{1-\delta_m}\left(L_m(\zeta)+\mathbf{g}(\zeta) \right)^t\left(L_m(\zeta)+\mathbf{g}(\zeta) \right)\right).
\end{equation*}
We now insert the definition \eqref{Lmzeta} of $L_m(\zeta)$ into the right side. After some computation we get the estimate
\begin{align}\label{Gmnest4}
G_{m,n}(\zeta)&\le\frac {e^{C_\epsilon\delta_m}}{\pi^m}\int_{\R^m}du\int_{\R^m}dv\exp\left(-\begin{pmatrix} \mathbf{u} \\ \mathbf{v}\end{pmatrix}^t
\left(I-\begin{pmatrix}\frac{\zeta^2}{1-\delta_m}\Lambda_m & 0 \\ 0 & \frac 1{1-\delta_m}\Lambda_m\end{pmatrix}\right)\begin{pmatrix} \mathbf{u} \\ \mathbf{v}\end{pmatrix}\right.
\\
&\left.+\frac 2{1-\delta_m}\begin{pmatrix} \mathbf{u} \\ \mathbf{v}\end{pmatrix}^t\begin{pmatrix} \zeta\Lambda_m^{1/2} & 0 \\ 0 & \Lambda_m^{1/2}\end{pmatrix}
T_m^t\begin{pmatrix} P_m & 0\\ 0 & P_m\end{pmatrix}\mathbf{g}(\zeta)+\frac 1{1-\delta_m}\mathbf{g}(\zeta)^t\mathbf{g}(\zeta)\right)\notag
\end{align}
Since $0\le\lambda_{m,k}\le \kappa$, we see that if $|\zeta|\le\rho<1/\sqrt{\kappa}$ and $m$ is so large that $\rho^2\kappa/(1-\delta_m)<1$, then the matrix
$I-\begin{pmatrix}\frac{\zeta^2}{1-\delta_m}\Lambda_m & 0 \\ 0 & \frac 1{1-\delta_m}\Lambda_m\end{pmatrix}$ is positive definite. Hence, we can compute the Gaussian integral in 
(\ref{Gmnest4}) to get the estimate
\begin{align}\label{Gmnest5}
&G_{m,n}(\zeta)\le \frac {e^{C_\epsilon\delta_m}}{\sqrt{\det(I-\frac{\zeta^2}{1-\delta_m}|B_m|)\det(I-\frac{1}{1-\delta_m}|B_m|)}}
\exp\left(\frac 1{1-\delta_m}\begin{pmatrix} P_m\mathbf{a}(\zeta) \\ P_m\mathbf{b}(\zeta)\end{pmatrix}^tT_m\right. \\
&\left.\times\begin{pmatrix} (I-\frac{\zeta^2}{1-\delta_m}\Lambda_m)^{-1}\frac{\zeta^2}{1-\delta_m}\Lambda_m & 0 \\ 0 & 
(I-\frac {1}{1-\delta_m}\Lambda_m)^{-1}\frac{1}{1-\delta_m}\Lambda_m\end{pmatrix}T_m^t\begin{pmatrix} P_m\mathbf{a}(\zeta) \\ P_m\mathbf{b}(\zeta)\end{pmatrix}
+\frac 1{1-\delta_m}\mathbf{g}(\zeta)^t\mathbf{g}(\zeta)\right)\notag
\end{align}
for all $\zeta\in[-\rho,\rho]$ and $m$ sufficiently large. Since $T_m$ is an orthogonal matrix we see that the $\ell^2(\R)\oplus\ell^2(\R)$-norm of 
$T_m^t\begin{pmatrix} P_m\mathbf{a}(\zeta) \\ P_m\mathbf{b}(\zeta)\end{pmatrix}$ is 
\begin{equation*}
\le\sum_{k=1}^ma(\zeta)_k^2+b(\zeta)_k^2\le\rho^2\sum_{k=1}^\infty k(|a_k|^2+|b_k|^2)<\infty,
\end{equation*}
by the assumption (\ref{gcond}). Since $|\zeta|\le \rho$, $\lambda_{m,k}$, and $\rho^2\kappa/(1-\delta_m)<1$, $\delta_m<1/2$ for $m$ sufficiently large, we see that the
expression in the exponent in the right side of (\ref{Gmnest5}) is bounded by a constant. Note that it is proved in Lemma \ref{lem:tc} that $\delta_m\to 0$ as $m\to\infty$ and hence
$C_\epsilon\delta_m\le 1$ if $m$ is sufficiently large. Also, since $B$ is trace class 
\begin{equation*}
\det(I-\frac{\zeta^2}{1-\delta_m}|B_m|)\to\det(I-\zeta^2|B|)
\end{equation*}
as $m\to\infty$ for $|\zeta|\le\rho$. This proves (\ref{Gmnest}) in Lemma  \ref{lem:Gmnres}. If we recall (\ref{fzetalambda}), we se that (\ref{Gmnest5}) gives
\begin{align*}
\varlimsup_{n\to\infty}G_{m,n}(\zeta)&\le\frac {e^{C_\epsilon\delta_m}}{\sqrt{\det(I-\frac{\zeta^2}{1-\delta_m}|B_m|)\det(I-\frac{1}{1-\delta_m}|B_m|)}}\\
&\times\exp\left(\frac 1{1-\delta_m}\begin{pmatrix} P_m\mathbf{a}(\zeta) \\ P_m\mathbf{b}(\zeta)\end{pmatrix}^tf_\zeta\left(\frac 1{1-\delta_m}K_m\right)
\begin{pmatrix} P_m\mathbf{a}(\zeta) \\ P_m\mathbf{b}(\zeta)\end{pmatrix}+\frac 1{1-\delta_m}\mathbf{g}(\zeta)^t\mathbf{g}(\zeta)\right),
\end{align*}
for $\zeta\in\R$, $|\zeta|\le\rho$. We can let $m\to\infty$ in the right side to conclude
\begin{equation}\label{Gmnupper}
\varlimsup_{m\to\infty}\varlimsup_{n\to\infty}G_{m,n}(\zeta)\le\frac {\exp\left(\mathbf{g}(\zeta)^t(I+f_\zeta(K))\mathbf{g}(\zeta)\right)}
{\sqrt{\det(I-\zeta^2|B|)\det(I-|B|)}}=G(\zeta),
\end{equation}
for $\zeta\in\R$, $|\zeta|\le\rho$.

In order to prove (\ref{Gmnlimit}) we also need a lower bound. Fix $D>0$ and let $\zeta\in\R$. We see from (\ref{Gmnzeta3}) that
\begin{align}\label{Gmnlb1}
G_{m,n}(\zeta)\ge\frac 1{\pi^m}\int_{[-D,D]^m}du\int_{[-D,D]^m}dv&\exp(-\begin{pmatrix} \mathbf{u} \\ \mathbf{v}\end{pmatrix}^t\begin{pmatrix} \mathbf{u} \\ \mathbf{v}\end{pmatrix})
\\&\times
\E_n\left[\exp\left(2(L_m(\zeta)+\mathbf{g}(\zeta))^t\begin{pmatrix}\mathbf{X} \\\mathbf{Y} \end{pmatrix}+W_m(\theta)\right)\right].\notag
\end{align}
Let $f(\theta)$ be such that
\begin{equation}\label{ftheta}
2(L_m(\zeta)+\mathbf{g}(\zeta))^t\begin{pmatrix}\mathbf{X} \\\mathbf{Y} \end{pmatrix}=\sum_\mu f(\theta_\mu).
\end{equation}
Note that $f$ is real-valued since $\zeta\in\R$. We want to estimate
\begin{equation*}
\E_n[\exp(\sum_\mu f(\theta_\mu)+W_m(\theta))]
\end{equation*}
from below. To do this we will use an idea from \cite[Lemma 2.3]{Jo1}. Let $h(\theta)$ be a given, smooth $2\pi$-periodic, real-valued function, and let $C(h)$ denote a positive 
constant, whose exact meaning will change, that depends only on $h$ but not on $n$ or $\theta$. Where it occurs below it can be bounded by $||h||_\infty$, $||h'||_\infty$ and
$||h''||_\infty$. Write
\begin{align*}
S_n(\theta)&=\sum_\mu f(\theta_\mu-\frac 1nh(\theta_\mu)),\\
U_n(\theta)&=-\frac 1n\sum_{\mu\neq\nu}\frac 12\cot(\frac{\theta_\mu-\theta_\nu}2)(h(\theta_\mu)-h(\theta_\nu)),\\
V_n(\theta)&=-\frac 1n\sum_\mu h'(\theta_\mu)-\frac 1n\sum_{\mu\neq\nu}\frac {(h(\theta_\mu)-h(\theta_\nu))^2}{\sin^2(\frac{\theta_\mu-\theta_\nu}2)}.
\end{align*}
If we let
\begin{equation*}
\phi_\mu=\theta_\mu-\frac 1nh(\theta_\mu),
\end{equation*}
a Taylor expansion gives
\begin{align}\label{Texp}
&\sum_{\mu\neq\nu}\log|e^{\I\phi_\mu}-e^{\I\phi_\nu}|+\sum_\mu f(\phi_\mu)=
\sum_{\mu\neq\nu}\log\left|2\sin\frac{\theta_\mu-\theta_\nu-\frac 1n (h(\theta_\mu)-h(\theta_\nu))}2\right|+\sum_\mu f(\theta_\mu-\frac 1n h(\theta_\mu))\\
&=\sum_{\mu\neq\nu}\log|e^{\I\theta_\mu}-e^{\I\theta_\nu}|+S_n(\theta)+U_n(\theta)+V_n(\theta)+\frac 1n\sum_\mu h'(\theta_\mu)+R_n^{(1)}(\theta),\notag
\end{align}
where
\begin{equation}\label{Rn1}
|R_n^{(1)}(\theta)|\le\frac{C(h)}n.
\end{equation}
We see from (\ref{Gmnlb1}), (\ref{ftheta}) and (\ref{Texp}) that
\begin{align}\label{Gmnlb2}
G_{m,n}(\zeta)&\ge\frac 1{\pi^m}\int_{[-D,D]^m}du\int_{[-D,D]^m}dv\exp(-\begin{pmatrix} \mathbf{u} \\ \mathbf{v}\end{pmatrix}^t\begin{pmatrix} \mathbf{u} \\ \mathbf{v}\end{pmatrix})
\\&\times
\E_n\left[\exp\left(S_n(\theta)+U_n(\theta)+V_n(\theta)+\sum_{\mu,\nu}w_m(\theta_\mu-\frac 1nh(\theta_\mu),\theta_\nu-\frac 1nh(\theta_\nu))+R_n^{(2)}(\theta)\right)\right],\notag
\end{align}
where
\begin{equation*}
R_n^{(2)}(\theta)=R_n^{(1)}(\theta)+\sum_\mu\log(1-\frac 1nh'(\theta_\mu))+\frac 1nh'(\theta_\mu)
\end{equation*}
by (\ref{Rn1}) satisfies
\begin{equation}\label{Rn2}
|R_n^{(2)}(\theta)|\le\frac{C(h)}n.
\end{equation}
The 1- and 2-point marginal densities for $\E_n[\cdot]$ are given by, \cite{Mec},
\begin{align}\label{p1p2}
p_{1,n}(\theta)&=\frac 1{2\pi}\\
p_{2,n}(\theta_1,\theta_2)&=\frac 1{4\pi^2n(n-1)}\left[n^2-\left|\sum_{j=0}^{n-1}e^{\I j(\theta_1-\theta_2)}\right|^2\right].\notag
\end{align}
These formulas can be used to show that
\begin{equation}\label{UVest}
\E_n[U_n(\theta)]=0,\quad \text{and} \quad \E_n[V_n(\theta)]\ge-\sum_{k=1}^\infty k|h_k|^2,
\end{equation}
where $h_k$ are the complex Fourier coefficients of $h$. Hence, we can use Jensen's inequality in (\ref{Gmnlb2}) to get the estimate
\begin{align}\label{Gmnlb3}
G_{m,n}(\zeta)&\ge\frac 1{\pi^m}\int_{[-D,D]^m}du\int_{[-D,D]^m}dv\exp(-\begin{pmatrix} \mathbf{u} \\ \mathbf{v}\end{pmatrix}^t\begin{pmatrix} \mathbf{u} \\ \mathbf{v}\end{pmatrix})
\\&\times
\exp\left(\E_n[S_n(\theta)]-\sum_{k=1}^\infty k|h_k|^2-\E_n\left[\sum_{\mu,\nu}w_m(\theta_\mu-\frac 1nh(\theta_\mu),\theta_\nu-\frac 1nh(\theta_\nu))\right]-\frac{C(h)}n\right).
\notag
\end{align}
Define
\begin{align}\label{Tn}
T_n^{(1)}&=\E_n[S_n(\theta)]=\frac n{2\pi}\int_{-\pi}^\pi f(\theta-\frac 1n h(\theta))\,d\theta,\\
T_n^{(2)}&=-\frac{n^2}{4\pi^2}\int_{-\pi}^\pi\int_{-\pi}^\pi w_m(\theta_1-\frac 1n h(\theta_1),\theta_2-\frac 1n h(\theta_2))\,d\theta_1d\theta_2,\notag\\
T_n^{(3)}&=\frac 1{4\pi^2}\int_{-\pi}^\pi\int_{-\pi}^\pi w_m(\theta_1-\frac 1n h(\theta_1),\theta_2-\frac 1n h(\theta_2))\left|\sum_{j=0}^{n-1}e^{\I j(\theta_1-\theta_2)}\right|^2
\,d\theta_1d\theta_2\notag\\
 &-\frac n{2\pi}\int_{-\pi}^\pi  w_m(\theta-\frac 1n h(\theta),\theta-\frac 1n h(\theta))\,d\theta.\notag
\end{align}
Then, using (\ref{p1p2}) and (\ref{Gmnlb3}), we find that for $\zeta\in\R$,
\begin{equation}\label{Gmnlb4}
G_{m,n}(\zeta)\ge\frac 1{\pi^m}\int_{[-D,D]^m}du\int_{[-D,D]^m}dv\exp\left(-\begin{pmatrix} \mathbf{u} \\ \mathbf{v}\end{pmatrix}^t\begin{pmatrix} \mathbf{u} \\ \mathbf{v}\end{pmatrix}
+T_n^{(1)}+T_n^{(2)}+T_n^{(3)}-\sum_{k=1}^\infty k|h_k|^2-\frac{C(h)}n\right).
\end{equation}
Note that $f$ depends on $u$ and $v$, and we can choose $h$ to depend on $u$ and $v$ also. Hence $T_n^{(j)}$ depends on $u$ and $v$.
Define, for $n$ sufficiently large (depending on $h$),
\begin{equation}\label{rnsn}
r_n(\theta)=\theta-\frac 1nh(\theta),\quad \text{and} \quad s_n(\theta)=r_n^{-1}(\theta).
\end{equation}
Then, if we write
\begin{equation}\label{snprime}
s_n'(\theta)=1+\frac 1nh'(\theta)+\frac 1{n^2}H_n(\theta),
\end{equation}
we have the bound
\begin{equation}\label{Hn}
|H_n(\theta)|\le C(h).
\end{equation}
By (\ref{Tn}), (\ref{rnsn}), \eqref{snprime}, and the fact that $f$ has zero mean,
\begin{align}\label{Tn1}
T_n^{(1)}&=\frac 1{2\pi}\int_{-\pi}^\pi f(\theta)h'(\theta)\,d\theta+\frac 1{2\pi n}\int_{-\pi}^\pi f(\theta)H_n(\theta)\,d\theta\\
&=-\I\sum_{k\in\mathbb{Z}} kf_{k}h_{-k}+e_n^{(1)},\notag
\end{align}
where
\begin{equation}\label{en1}
|e_n^{(1)}|\le \frac{C(h)}n||f||_1.
\end{equation}
Also,
\begin{align*}
T_n^{(2)}&=-\frac{n^2}{4\pi^2}\int_{-\pi}^\pi\int_{-\pi}^\pi w_m(\theta_1,\theta_2)s_n'(\theta_1)s_n'(\theta_2)\,d\theta_1d\theta_2\\
&=-\re\sum_{k\vee \ell>m}a_{k\ell}\frac{n^2}{4\pi^2}\int_{-\pi}^\pi\int_{-\pi}^\pi e^{-\I k\theta_1-\I\ell\theta_2}s_n'(\theta_1)s_n'(\theta_2)\,d\theta_1d\theta_2\notag.
\end{align*}
By \eqref{snprime} and \eqref{Hn},
\begin{equation*}
\frac{n^2}{4\pi^2}\int_{-\pi}^\pi\int_{-\pi}^\pi e^{-\I k\theta_1-\I\ell\theta_2}s_n'(\theta_1)s_n'(\theta_2)\,d\theta_1d\theta_2=
-k\ell h_kh_\ell+e_n^{(2)}(k,\ell),
\end{equation*}
where
\begin{equation*}
|e_n^{(2)}(k,\ell)|\le \frac{C(h)}n.
\end{equation*}
Thus
\begin{equation}\label{Tn2}
T_n^{(2)}=\re\sum_{k\vee \ell>m}k\ell a_{k\ell}h_kh_\ell-e_n^{(2)},
\end{equation}
where
\begin{equation}\label{en2}
|e_n^{(2)}|=\left|\re\sum_{k\vee \ell>m}a_{k\ell}e_n^{(2)}(k,\ell)\right|\le \frac{C(h)}n,
\end{equation}
by Lemma \ref{lem:tc}.
We now consider $T_n^{(3)}$. By Taylor's theorem
\begin{align*}
w_m(\theta_1-\frac 1n h(\theta_1),\theta_2-\frac 1n h(\theta_2))&=\re\sum_{k\vee \ell>m}a_{k\ell}e^{-\I k\theta_1-\I\ell\theta_2}\\
&-\frac 1n(h(\theta_1)+h(\theta_2))
\re\sum_{k\vee \ell>m}-\I ka_{k\ell}e^{-\I k\theta_1-\I\ell\theta_2}+e_n^{(3)}(\theta_1,\theta_2),
\end{align*}
where
\begin{equation}\label{en3theta}
|e_n^{(3)}(\theta_1,\theta_2)|\le\frac{C(h)}{n^2}\sum_{k\vee \ell>m}(k^2+\ell^2)|a_{k\ell}|\le \frac{C(h)}{n^2},
\end{equation}
by Lemma \ref{lem:tc}.
Thus, by the definition of $T_n^{(3)}$,
\begin{align}\label{Tn3}
T_n^{(3)}&=\re\sum_{k\vee \ell>m}a_{k\ell}\frac{1}{4\pi^2}\int_{-\pi}^\pi\int_{-\pi}^\pi e^{-\I k\theta_1-\I\ell\theta_2}\left|\sum_{j=0}^{n-1}e^{\I j(\theta_1-\theta_2)}\right|^2
\,d\theta_1d\theta_2\\
&-\frac 1n\re\sum_{k\vee \ell>m}-\I ka_{k\ell}\frac{1}{4\pi^2}\int_{-\pi}^\pi\int_{-\pi}^\pi (h(\theta_1)+h(\theta_2))e^{-\I k\theta_1-\I\ell\theta_2}\left|\sum_{j=0}^{n-1}e^{\I j(\theta_1-\theta_2)}\right|^2\,d\theta_1d\theta_2\notag\\
&+2\re\sum_{k\vee \ell>m}a_{k\ell}\frac 1{2\pi}\int_{-\pi}^\pi h(\theta)(-\I ke^{-\I(k+\ell)\theta})\,d\theta+e_N^{(3)}=:I_1+I_2+I_3+e_n^{(3)},\notag
\end{align}
where
\begin{equation*}
e_n^{(3)}=\frac{1}{4\pi^2}\int_{-\pi}^\pi\int_{-\pi}^\pi e_n^{(3)}(\theta_1,\theta_2)\left|\sum_{j=0}^{n-1}e^{\I j(\theta_1-\theta_2)}\right|^2\,d\theta_1d\theta_2
-\frac{n}{2\pi}\int_{-\pi}^\pi e_n^{(3)}(\theta,\theta)\,d\theta.
\end{equation*}
Since
\begin{equation*}
\frac{1}{4\pi^2}\int_{-\pi}^\pi\int_{-\pi}^\pi \left|\sum_{j=0}^{n-1}e^{\I j(\theta_1-\theta_2)}\right|^2\,d\theta_1d\theta_2=n
\end{equation*}
it follows from the estimate \eqref{en3theta} that
\begin{equation}\label{en3}
|e_n^{(3)}|\le \frac{C(h)}n.
\end{equation}
Now,
\begin{align}\label{I1}
I_1&=\re\sum_{k\vee \ell>m}a_{k\ell}\sum_{j_1,j_2=0}^{n-1}\frac{1}{4\pi^2}\int_{-\pi}^\pi\int_{-\pi}^\pi e^{-\I k\theta_1-\I\ell\theta_2+\I(j_1-j_2)(\theta_1-\theta_2)}\,d\theta_1d\theta_2\\
&=\re\sum_{k\vee \ell>m}a_{k\ell}\sum_{j_1,j_2=0}^{n-1}\delta_{k,j_1-j_2}\delta_{\ell,j_2-j_1}=0,
\end{align}
since non-zero Kronecker deltas require $j_1-j_2=k=-\ell$, which is not possible since $k,\ell\ge 1$. Next, we see that
\begin{align*}
I_2&=-\frac 1n\re\sum_{k\vee \ell>m}-\I ka_{k\ell}\sum_{j_1,j_2=0}^{n-1}(h_{k+j_2-j_1}\delta_{\ell,j_2-j_1}+h_{\ell+j_1-j_2}\delta_{k,j_1-j_2})\\
&=-\frac 1n\re\sum_{k\vee \ell>m}-\I ka_{k\ell}(2n-(k+\ell))h_{k+\ell}.
\end{align*}
Finally,
\begin{equation*}
I_3=2\re\sum_{k\vee \ell>m}-\I ka_{k\ell}h_{k+\ell}
\end{equation*}
and thus
\begin{equation*}
I_2+I_3=-\frac 1{2n}\re\sum_{k\vee \ell>m}\I (k+\ell)^2a_{k\ell}h_{k+\ell}.
\end{equation*}
Using Lemma \ref{lem:tc} we see that
\begin{equation*}
|I_2+I_3|\le\frac{C(h)}n,
\end{equation*}
and thus by \eqref{Tn3}, \eqref{en3} and \eqref{I1},
\begin{equation*}
|T_n^{(3)}|\le \frac{C(h)}n.
\end{equation*}
We have shown that
\begin{equation*}
T_n^{(1)}+T_n^{(2)}+T_n^{(3)}\ge -\I \sum_{k\in\mathbb{Z}}kf_kh_{-k}+\re\sum_{k\vee \ell>m}k\ell a_{k\ell}h_kh_\ell-\frac{C(h)}n(1+||f||_1),
\end{equation*}
and inserting this estimate into \eqref{Gmnlb4} gives
\begin{align}\label{Gmnlb5}
G_{m,n}(\zeta)&\ge\frac 1{\pi^m}\int_{[-D,D]^m}du\int_{[-D,D]^m}dv\exp\left(-\begin{pmatrix} \mathbf{u} \\ \mathbf{v}\end{pmatrix}^t\begin{pmatrix} \mathbf{u} \\ \mathbf{v}\end{pmatrix}
-\sum_{k=1}^\infty k h_kh_{-k}-\I \sum_{k\in\mathbb{Z}}kf_kh_{-k}\right.\\
&\left.+\re\sum_{k\vee \ell>m}k\ell a_{k\ell}h_kh_\ell-\frac{C(h)}n(1+||f||_1)\right).\notag
\end{align}

We now choose $h_k=-\I\sgn(k)f_k$, $1\le |k|\le m$, $h_k=0$ if $|k|>m$ or $k=0$, so that $h$ is a cut-off of the Fourier series for the conjugate function to $f$. Then
\begin{equation*}
-\I \sum_{k\in\mathbb{Z}}kf_kh_{-k}=2\sum_{k=1}^mk|f_k|^2, \quad \text{and} \quad \sum_{k=1}^\infty k h_kh_{-k}=\sum_{k=1}^mk|f_k|^2,
\end{equation*}
and
\begin{equation*}
\re\sum_{k\vee \ell>m}k\ell a_{k\ell}h_kh_\ell=0.
\end{equation*}
Hence, from \eqref{Gmnlb5} we see that for $\zeta\in\R$,
\begin{equation*}
G_{m,n}(\zeta)\ge\frac 1{\pi^m}\int_{[-D,D]^m}du\int_{[-D,D]^m}dv\exp\left(-\begin{pmatrix} \mathbf{u} \\ \mathbf{v}\end{pmatrix}^t\begin{pmatrix} \mathbf{u} \\ \mathbf{v}
\end{pmatrix}+\sum_{k=1}^mk|f_k|^2-\frac{C(h)}n(1+||f||_1)\right).
\end{equation*}
With $m$ fixed and for $u,v\in [-D,D]^m$ with $D$ fixed, and $|\zeta|\le\rho$, we see that $C(h)$ is bounded and thus
\begin{equation}\label{Gmnlb6}
\varliminf_{n\to\infty}G_{m,n}(\zeta)\ge\frac 1{\pi^m}\int_{[-D,D]^m}du\int_{[-D,D]^m}dv\exp\left(-\begin{pmatrix} \mathbf{u} \\ \mathbf{v}\end{pmatrix}^t\begin{pmatrix} \mathbf{u} \\ \mathbf{v}
\end{pmatrix}+\sum_{k=1}^mk|f_k|^2\right).
\end{equation}
We see from \eqref{ftheta} that
\begin{equation*}
\sum_{k=1}^mk|f_k|^2=(L_m(\zeta)+P_m\mathbf{g}(\zeta))^t(L_m(\zeta)+P_m\mathbf{g}(\zeta)).
\end{equation*}
In \eqref{Gmnlb6} we can let $D\to\infty$ so that the integration in the right side is over $\R^m$ and compute the Gaussian integral. The same computations that led to 
\eqref{Gmnest5} then give
\begin{align*}
\varliminf_{n\to\infty}G_{m,n}(\zeta)&\ge \frac 1{\sqrt{\det(I-\frac{\zeta^2}{1-\delta_m}|B_m|)\det(I-\frac{1}{1-\delta_m}|B_m|)}}
\exp\left(\begin{pmatrix} P_m\mathbf{a}(\zeta) \\ P_m\mathbf{b}(\zeta)\end{pmatrix}^tT_m\right. \\
&\left.\times\begin{pmatrix} (I-\zeta^2\Lambda_m)^{-1}\zeta^2\Lambda_m & 0 \\ 0 & 
(I-\Lambda_m)^{-1}\Lambda_m\end{pmatrix}T_m^t\begin{pmatrix} P_m\mathbf{a}(\zeta) \\ P_m\mathbf{b}(\zeta)\end{pmatrix}
+\begin{pmatrix} P_m\mathbf{a}(\zeta) \\ P_m\mathbf{b}(\zeta)\end{pmatrix}^t\begin{pmatrix} P_m\mathbf{a}(\zeta) \\ P_m\mathbf{b}(\zeta)\end{pmatrix}\right)\notag.
\end{align*}
We can now let $m\to\infty$, and the same computations as previously then give
\begin{equation*}
\varliminf_{m\to\infty}\varliminf_{n\to\infty}G_{m,n}(\zeta)\ge G(\zeta),
\end{equation*}
for $\zeta\in[-\rho,\rho]$ which is what we wanted to prove.

\section{Proof of Theorem \ref{thm:WP}}\label{sec:WPproof}
Without loss of generality we can assume that $\text{\rm cap}(\gamma)=1$ and we will do so in this section.
Consider the function $E_n(r)$ defined by \eqref{Enr}. We have the following lemma.

\begin{lemma}\label{lem:Enrincr}
The sequence of functions $E_n(r)$, $n\ge 1$ is increasing.
\end{lemma}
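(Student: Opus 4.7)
The plan is to reduce $E_{n+1}(r)\geq E_n(r)$ to a universal lower bound for the monic orthogonal polynomial norm on $\gamma_r$, and then to obtain that bound by a Szegő-type pullback through $\phi_r$.

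First I would unpack the increment. Since $\text{\rm cap}(\gamma)=1$ is assumed and $\phi_r(z)=z+\phi_0/r+O(z^{-1})$ has leading coefficient $1$ at infinity, $\phi_r$ is the exterior mapping function of $\gamma_r$ normalized to have leading coefficient $1$, so $\text{\rm cap}(\gamma_r)=1$ and $E_n(r)=\log Z_n(\gamma_r)-n\log(2\pi)$. Because $Z_n(\gamma_r)=D_n^{(r)}[1]$ is the Gram determinant of $\{1,\zeta,\dots,\zeta^{n-1}\}$ in $L^2(\gamma_r,|d\zeta|)$, Gram--Schmidt gives $D_n^{(r)}=\prod_{j=0}^{n-1}h_j(\gamma_r)$, where $h_j(\gamma_r)$ denotes the squared $L^2(|d\zeta|)$ norm of the monic orthogonal polynomial of degree $j$ on $\gamma_r$. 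Hence
\begin{equation*}
E_{n+1}(r)-E_n(r)=\log\frac{h_n(\gamma_r)}{2\pi},
\end{equation*}
and the lemma reduces to the universal lower bound $h_n(\gamma_r)\geq 2\pi$.

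Next I would prove this bound by a Szegő-type pullback. Since $\gamma_r$ is analytic, $\phi_r$ extends conformally to a neighbourhood of $\{|w|\geq 1\}$ with $\phi_r'$ non-vanishing there; since $\phi_r'(\infty)=1$ and $\phi_r'$ has no zeros on $\{|w|>1\}$, the winding number of $\phi_r'$ around the origin along every circle $|w|=R>1$ vanishes, so $\log\phi_r'$ has a single-valued analytic branch on $\{|w|>1\}$ and $\psi(w):=\phi_r'(w)^{1/2}$ is analytic there with $\psi(\infty)=1$ and $|\psi(e^{\I\theta})|^2=|\phi_r'(e^{\I\theta})|$. For any monic polynomial $P$ of degree $n$, set $G(w):=P(\phi_r(w))\psi(w)$. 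The change of variables $\zeta=\phi_r(e^{\I\theta})$ turns $\int_{\gamma_r}|P(\zeta)|^2|d\zeta|$ into $\int_{-\pi}^{\pi}|G(e^{\I\theta})|^2\,d\theta$. Since $G$ is analytic on $\{|w|>1\}$ with $G(w)=w^n+O(w^{n-1})$ at infinity, the Fourier expansion of $G|_{\mathbb{T}}$ contains only modes $k\leq n$ and has $n$-th coefficient equal to $1$. Parseval then gives $\int_{-\pi}^{\pi}|G(e^{\I\theta})|^2\,d\theta\geq 2\pi$, uniformly in $P$. Taking the infimum over monic $P$ of degree $n$ yields $h_n(\gamma_r)\geq 2\pi$, which completes the proof.

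I do not anticipate a serious obstacle. The one point that needs care is the construction of the analytic square root $\psi$, which depends on the vanishing-winding computation at infinity together with analyticity of $\gamma_r$ to guarantee that $\psi$ extends continuously to $\mathbb{T}$ and that the change of variables is valid; once $\psi$ is in hand, the remainder is a direct Hardy-space Parseval computation.
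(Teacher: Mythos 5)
Your proof is correct, and while it shares the paper's overall skeleton, the key estimate is obtained by a different mechanism. Like the paper, you reduce the increment to the norm of the monic orthogonal polynomial: the paper invokes Szeg\H{o}'s extremal characterization $Z_{n+1}(\gamma_r)/(2\pi Z_n(\gamma_r))=\min_{p\in\Pi_n}\frac1{2\pi}\int_{\gamma_r}|p|^2|d\zeta|$, whereas you derive the same identity from the Gram--Schmidt factorization of the Gram determinant --- same content, so the reduction to showing $\frac1{2\pi}\int_{\gamma_r}|\pi_n(\zeta)|^2|d\zeta|\ge 1$ is common to both. The difference is in how that lower bound is proved. The paper applies Jensen's inequality to pass from the $L^2$ average to the average of $2\log|\pi_n\circ\phi_r|+\log|\phi_r'|$, inverts via $z\mapsto 1/z$, and uses subharmonicity (the sub-mean-value property at $0$) of $\log|z^n\pi_n(\phi_r(1/z))|$ and $\log|\phi_r'(1/z)|$, both of which equal $0$ at the origin. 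You instead construct an analytic branch of $(\phi_r')^{1/2}$ on a neighbourhood of $\{|w|\ge1\}$ (legitimate, since $\phi_r$ is univalent there so $\phi_r'$ is non-vanishing, and the exterior of the disk on the sphere is simply connected with $\phi_r'(\infty)=1$), pull back to $G(w)=P(\phi_r(w))\phi_r'(w)^{1/2}$, and use Parseval: the Laurent expansion of $G$ has only modes $k\le n$ with top coefficient $1$ because $\phi_r$ is monically normalized at infinity (here you correctly use $\mathrm{cap}(\gamma_r)=1$, inherited from the normalization $\mathrm{cap}(\gamma)=1$), so $\frac1{2\pi}\int|G|^2\,d\theta=\sum_{k\le n}|c_k|^2\ge|c_n|^2=1$. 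Both routes need the analyticity of $\phi_r$ across $\mathbb{T}$; your Hardy-space/Parseval argument is the classical Szeg\H{o}-type one and is in fact slightly sharper (it identifies the $L^2$ norm as a sum of squared Laurent coefficients rather than bounding it through an arithmetic--geometric mean inequality), at the modest extra cost of constructing the square root, which the paper avoids by working with $2\log|\cdot|$ and subharmonicity directly.
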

The lemma will be proved below. We will now prove Theorem \ref{thm:WP} using Lemma \ref{lem:Enrdecr} and Lemma \ref{lem:Enrincr}.

\begin{proof}[Proof of Theorem \ref{thm:WP}]
Let $B_r$, $r>1$ be the Grunsky operator for the curve $\gamma_r$. Then for each $r>1$, by \eqref{logdetlimit},
\begin{equation}\label{Er}
E(r):=\lim_{n\to\infty}E_n(r)=-\frac 12\log\det(I-B_r^*B_r)
\end{equation}
since $\gamma_r$ satisfies the conditions of Theorem \ref{thm:main}. Assume first that $\gamma$ is a Weil-Petersson quasicircle. Then the Grunsky operator $B$ for
$\gamma$ is a Hilbert-Schmidt operator so $B^*B$ is a trace-class operator and consequently it follows from \eqref{Er} that
\begin{equation}\label{Erlimit}
E:=\lim_{r\to 1+}E(r)=-\frac 12\log\det(I-B^*B)<\infty,
\end{equation}
which can be seen by expressing the Grunsky coefficients for $B_r$ in terms of the Grunsky coefficients of $B$, see below.
Since $E_n(r)$ is increasing in $n$, we have that $E_n(r)\le E(r)$ and combining this with \eqref{Erlimit} we obtain
\begin{equation*}
E_n:=\log\frac{Z_n(\gamma)}{(2\pi)^n}=\lim_{r\to 1+}E_n(r)\le\lim_{r\to 1+}E(r)=E<\infty
\end{equation*}
for all $n\ge 1$. Hence
\begin{equation}\label{Znupper}
\limsup_{n\to\infty}\frac{Z_n(\gamma)}{(2\pi)^n}\le E<\infty,
\end{equation}
which proves \eqref{ZnWPbound}. It remains to prove that we also get the right limit. From the monotonicity in $r$ we have that $E_n(r)\le E_n(r')$ if $1<r<r'$, and letting
$r'\to 1+$ gives $E_n(r)\le E_n$ for all $r>1$, $n\ge 1$. Taking the limit $n\to\infty$ gives $E(r)\le\varliminf_{n\to\infty} E_n$ for all $r>1$. Finally, we can let $r\to 1+$ to obtain
$E\le\varliminf_{n\to\infty} E_n$, which combined with \eqref{Znupper} gives,
\begin{equation*}
\lim_{n\to\infty}\log\frac{Z_n(\gamma)}{(2\pi)^n}=E=-\frac 12\log\det(I-B^*B),
\end{equation*}
which is what we wanted to prove.

Next we want to prove that if \eqref{ZnWPbound} holds, then $\gamma$ is a Weil-Petersson quasicircle. It follows from Lemma \ref{lem:Enrdecr} and the definition
\eqref{Zndef} that
\begin{equation}\label{Znineq}
\frac{Z_n(\gamma_r)}{(2\pi)^n}\le \frac{Z_n(\gamma)}{(2\pi)^n}
\end{equation}
for any $r>1$. We can use \eqref{logdetlimit} and take the limit $n\to\infty$ in \eqref{Znineq} to obtain
\begin{equation}\label{detbound}
(\det(I-B_r^*B_r))^{-1/2}\le \varlimsup_{n\to\infty}\frac{Z_n(\gamma)}{(2\pi)^n}=:A<\infty
\end{equation}
for any $r>1$. From \eqref{logphiq} we see that if the Grunsky coefficients for $\gamma$ are $b_{k\ell}$, ${k,\ell\ge 1}$, then the Grunsky coefficients for $\gamma_r$ are
$b_{k\ell}/r^{k+\ell}$, ${k,\ell\ge 1}$. Let $\lambda_j(r)$ be the singular values of $B_r$. Then \eqref{detbound} gives the inequality
\begin{equation*}
\prod_{j=1}^\infty (1-\lambda_j(r)^2)\ge A^{-2},
\end{equation*}
so
\begin{equation*}
||B_r||^2_{HS}=\sum_{j=1}^\infty \lambda_j(r)^2\le -\sum_{j=1}^\infty\log(1-\lambda_j(r)^2)\le 2\log A
\end{equation*}
for all $r>1$. Thus
\begin{equation*}
\sum_{k,\ell=1}^\infty\frac{|b_{k\ell}|^2}{2r^{k+\ell}}\le 2\log A,
\end{equation*}
and letting $r\to 1+$ shows that $B$ is a Hilbert-Schmidt operator, so $\gamma$ is a Weil-Petersson quasicircle.
\end{proof}

Lemma \ref{lem:Enrdecr} will follow from the following lemma.

\begin{lemma}\label{lem:Enrconv}
The function $E_n(r)$ defined by \eqref{Enr} satisfies
\begin{equation}\label{Enrconv}
rE_n''(r)+E_n'(r)\ge 0
\end{equation}
for all $r>1$. Furthermore
\begin{equation}\label{Enrlimit}
\lim_{r\to\infty}E_n(r)=0.
\end{equation}
\end{lemma}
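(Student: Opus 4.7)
The plan is to rewrite $Z_n(\gamma_r)$ as the $L^2$-norm squared on the polytorus $\{|w_\mu|=r\}^n$ of an explicit holomorphic function, and then read off convexity in $\log r$ from orthogonality of monomials. Setting $u=\log r$ and $\tilde E_n(u):=E_n(e^u)$, the identity $rE_n''(r)+E_n'(r) = r^{-1}\tilde E_n''(u)$ shows that the differential inequality is equivalent to convexity of $\tilde E_n$ in $u$.

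Parametrize $\gamma_r$ by $\zeta_\mu=\phi(w_\mu)/r$ with $|w_\mu|=r$, so $|d\zeta_\mu|=r^{-1}|\phi'(w_\mu)|\,|dw_\mu|$ and $\prod_{\mu\neq\nu}|\zeta_\mu-\zeta_\nu|=r^{-n(n-1)}|V(\phi(w_1),\ldots,\phi(w_n))|^2$, where $V$ is the Vandermonde. Setting
\begin{equation*}
F(w):=V(\phi(w_1),\ldots,\phi(w_n))\prod_\mu\sqrt{\phi'(w_\mu)},
\end{equation*}
with the branch $\sqrt{\phi'(\infty)}=1$ (well defined since $\phi'$ is nonvanishing on the simply connected region $\{|w|>1\}$), one obtains
\begin{equation*}
Z_n(\gamma_r)=\frac{1}{n!\,r^{n^2}}\int_{\{|w_\mu|=r\}^n}|F(w)|^2\prod_\mu|dw_\mu|.
\end{equation*}
The function $F$ is holomorphic on $\{|w_\mu|>1\}^n$ with polynomial growth at infinity, so it admits a convergent Laurent expansion $F(w)=\sum_{\alpha\in\mathbb{Z}^n}c_\alpha w^\alpha$ with $c_\alpha=0$ unless $\alpha_\mu\le n-1$ for all $\mu$. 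Using $\int_{\mathbb{T}^n}e^{\I(\alpha-\beta)\cdot\theta}\,d\theta=(2\pi)^n\delta_{\alpha,\beta}$, this yields
\begin{equation*}
\frac{Z_n(\gamma_r)}{(2\pi)^n}=\frac{1}{n!}\sum_{m\le\binom{n}{2}} A_m\,e^{(2m-n(n-1))u},\qquad A_m:=\sum_{|\alpha|=m}|c_\alpha|^2\ge 0,
\end{equation*}
where $|\alpha|:=\sum_\mu\alpha_\mu$; the bound $m\le\binom{n}{2}$ holds because $V(\phi(w))$ has total degree $\binom{n}{2}$ in $w$ (as $\phi(w_\mu)=w_\mu+O(1)$) while $\sqrt{\phi'(w_\mu)}$ is bounded at infinity.

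Hence $\tilde E_n(u)=-\log n!+\log Q(u)$ with $Q(u):=\sum_m A_m e^{(2m-n(n-1))u}$. Since $Q$ is a positive combination of exponentials in $u$ (equivalently, the moment generating function of the nonnegative measure $\sum_m A_m\delta_{2m-n(n-1)}$), $\log Q$ is convex in $u$: $(\log Q)''(u)$ equals the variance of $2m-n(n-1)$ under the tilted probabilities $A_m e^{(2m-n(n-1))u}/Q(u)$, which is nonnegative. This proves $rE_n''(r)+E_n'(r)\ge 0$. For the limit, the maximum exponent $2m-n(n-1)=0$ is attained only at $m=\binom{n}{2}$, so $Q(u)\to A_{\binom{n}{2}}$ as $u\to\infty$. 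The total-degree-$\binom{n}{2}$ part of $F$ is exactly $V(w)$, since $\phi(w_\mu)-w_\mu=O(1)$ and $\sqrt{\phi'(w_\mu)}-1=O(w_\mu^{-1})$ each strictly lower the total degree; expanding $V(w)=\sum_{\sigma\in S_n}\mathrm{sgn}(\sigma)\prod_\mu w_\mu^{\sigma(\mu)-1}$ gives $n!$ distinct multi-indices with $|c_{\alpha_\sigma}|^2=1$, so $A_{\binom{n}{2}}=n!$ and $\tilde E_n(u)\to-\log n!+\log n!=0$. The main point requiring care is this top-total-degree bookkeeping --- verifying that no cross-contribution from lower-degree parts of $V(\phi(w))$ can be boosted by $\sqrt{\phi'(w_\mu)}$ (which is bounded by $1+O(w^{-1})$) to yield terms of maximal total degree $\binom{n}{2}$.
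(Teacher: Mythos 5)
Your proof is correct, and it takes a genuinely different route from the paper. The paper proves the differential inequality \eqref{Enrconv} by differentiating the integral formula \eqref{Zngammarformula} in $r$, exploiting rotation invariance through an auxiliary parameter $\alpha$ and the Cauchy--Riemann-type relations $\partial_\alpha F=\I r\partial_r F$, which let it rewrite $r^2E_n''(r)+rE_n'(r)$ as a manifestly nonnegative double integral (a variance of $\partial_rF$); the limit \eqref{Enrlimit} is then obtained separately from the uniform convergence $\phi_r\to\mathrm{id}$, $\phi_r'\to 1$ on $\T$ and the CUE normalization. You instead write $Z_n(\gamma_r)$ as the squared $L^2$-norm on the polytorus of the holomorphic function $V(\phi(w))\prod_\mu\sqrt{\phi'(w_\mu)}$ and apply Parseval, obtaining the exact structural formula $Z_n(\gamma_r)/(2\pi)^n=\frac 1{n!}\sum_{m\le n(n-1)/2}A_m\,r^{2m-n(n-1)}$ with $A_m\ge 0$, from which log-convexity in $\log r$ (hence \eqref{Enrconv}) and the $r\to\infty$ limit follow at once, the latter via the top-degree identification $A_{n(n-1)/2}=n!$, which you rightly single out as the step needing care (only top-total-degree terms of each factor can contribute, since total degree is additive and every other term strictly lowers it). Your route buys more than the paper's: the nonpositivity of all exponents $2m-n(n-1)$ gives Lemma \ref{lem:Enrdecr} (monotonicity in $r$) directly, without the contradiction argument at $r=\infty$, and it exhibits $E_n(r)+\log n!$ as the logarithm of a Dirichlet-type polynomial in $r$ with nonnegative coefficients. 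What it costs is generality: it is tied to $\beta=2$, where $\prod_{\mu\neq\nu}|\zeta_\mu-\zeta_\nu|$ is the modulus squared of a holomorphic function, whereas the paper's variance argument only uses the real-variable structure of \eqref{Fralpha} and would adapt to other $\beta$. Minor points you could make explicit: convexity of $\log Q$ for the infinite sum follows either as a pointwise limit of the convex logs of partial sums or by Cauchy--Schwarz ($QQ''\ge (Q')^2$) after justifying termwise differentiation by local uniform convergence for $u>0$, and the $u\to\infty$ limit of $Q$ uses dominated convergence; both are routine given that $Q(u_0)<\infty$ for every $u_0>0$.
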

\begin{proof}
Note that by definition
\begin{equation}\label{Zngammarformula}
Z_n(\gamma_r)=\frac 1{n!r^{n(n-1)}}\int_{[-\pi,\pi]^n}\exp\bigg(\re\bigg(\sum_{\mu\neq\nu}\log(\phi(re^{\I\theta_{\mu}})-\phi(re^{\I\theta_{\nu}}))+\sum_\mu\log\phi'(re^{\I\theta_\mu})
\bigg)\bigg)\,d\theta,
\end{equation}
where we used $\phi_r(z)=\phi(rz)/r$ and $\phi_r'(z)=\phi'(rz)$. Making the change of variables $\theta_\mu\mapsto\theta_\mu+\alpha$ for some real $\alpha$ in the right
side of \eqref{Zngammarformula} does not change its value so
\begin{equation}\label{Zngammarformula2}
Z_n(\gamma_r)=\frac 1{n!r^{n(n-1)}}\int_{[-\pi,\pi]^n}e^{F(r,\alpha,\theta)}\,d\theta,
\end{equation}
where
\begin{equation}\label{Fralpha}
F(r,\alpha,\theta)=\sum_{\mu\neq\nu}\log(\phi(re^{\I(\theta_\mu+\alpha)})-\phi(re^{\I(\theta_\nu+\alpha)}))+\sum_\mu\log\phi'(re^{\I(\theta_\mu+\alpha})).
\end{equation}
From the definition of $E_n(r)$ we then obtain
\begin{equation}\label{Enrformula}
E_n(r)=-\log((2\pi)^nn!)-n(n-1)\log r+\log\int_{[-\pi,\pi]^n}e^{F(r,\alpha,\theta)}\,d\theta.
\end{equation}
Using this formula we can compute the derivatives of $E_n(r)$ which gives
\begin{equation}\label{Enrprime}
E_n'(r)=-\frac{n(n-1)}r+\frac{\int(\re\partial_rF)e^{\re F}\,d\theta}{\int e^{\re F}\,d\theta},
\end{equation}
and
\begin{equation}\label{Enrbis}
E_n''(r)=\frac{n(n-1)}r+\frac{\int(\re\partial_r^2F+(\re\partial_rF)^2)e^{\re F}\,d\theta}{\int e^{\re F}\,d\theta}
-\left(\frac{\int(\re\partial_rF)e^{\re F}\,d\theta}{\int e^{\re F}\,d\theta}\right)^2,
\end{equation}
where the integrals are over $[-\pi,\pi]^n$. If we take the derivative with respect to $\alpha$ in \eqref{Enrformula} we get similarly
\begin{equation}\label{Enrprimealpha}
\frac{\int(\re\partial_\alpha F)e^{\re F}\,d\theta}{\int e^{\re F}\,d\theta}=0,
\end{equation}
and
\begin{equation}\label{Enrbisalpha}
\frac{\int(\re\partial_r^2F+(\re\partial_\alpha F)^2)e^{\re F}\,d\theta}{\int e^{\re F}\,d\theta}
-\left(\frac{\int(\re\partial_\alpha F)e^{\re F}\,d\theta}{\int e^{\re F}\,d\theta}\right)^2=0.
\end{equation}
From the definition \eqref{Fralpha} we see that
\begin{equation*}
\partial_rF=\sum_{\mu\neq\nu}\frac{e^{\I(\theta_\mu+\alpha)}\phi'(re^{\I(\theta_\mu+\alpha)})-e^{\I(\theta_\nu+\alpha)}\phi'(re^{\I(\theta_\nu+\alpha)})}
{\phi(e^{\I(\theta_\mu+\alpha)})-\phi(e^{\I(\theta_\nu+\alpha)})}+\sum_\mu\frac{e^{\I(\theta_\mu+\alpha)}\phi''(re^{\I(\theta_\mu+\alpha)})}{\phi'(e^{\I(\theta_\mu+\alpha)})},
\end{equation*}
and
\begin{align*}
\partial_r^2F&=\sum_{\mu\neq\nu}\left[\frac{(e^{\I(\theta_\mu+\alpha)})^2\phi''(re^{\I(\theta_\mu+\alpha)})-(e^{\I(\theta_\nu+\alpha)})^2\phi''(re^{\I(\theta_\nu+\alpha)})}
{\phi(e^{\I(\theta_\mu+\alpha)})-\phi(e^{\I(\theta_\nu+\alpha)})}\right.\\
&-\left.\left(\frac{e^{\I(\theta_\mu+\alpha)}\phi'(re^{\I(\theta_\mu+\alpha)})-e^{\I(\theta_\nu+\alpha)}\phi'(re^{\I(\theta_\nu+\alpha)})}
{\phi(e^{\I(\theta_\mu+\alpha)})-\phi(e^{\I(\theta_\nu+\alpha)})}\right)^2\right]\\
&+\sum_\mu\left[\frac{(e^{\I(\theta_\mu+\alpha)})^2\phi'''(re^{\I(\theta_\mu+\alpha)})}{\phi'(e^{\I(\theta_\mu+\alpha)})}
-\left(\frac{e^{\I(\theta_\mu+\alpha)}\phi''(re^{\I(\theta_\mu+\alpha)})}{\phi'(e^{\I(\theta_\mu+\alpha)})}\right)^2\right].
\end{align*}
An analogous computation gives
\begin{equation*}
\partial_\alpha F=\I r\partial_rF,\quad \text{and}\quad \partial_\alpha^2F=-r\partial_rF-r^2\partial_r^2F.
\end{equation*}
Consequently,
\begin{equation*}
\re\partial_\alpha F=-r\im\partial_rF, \quad \text{and}\quad
\re\partial_\alpha^2F=-r\re\partial_r F-r^2\re\partial_r^2F.
\end{equation*}
If we insert these relations into \eqref{Enrbisalpha}, we get
\begin{equation*}
\frac{\int(-r\re\partial_rF-r^2\re\partial_r^2F+r^2(\im\partial_rF)^2)e^{\re F}\,d\theta}{\int e^{\re F}\,d\theta}-
\left(\frac{\int(r\im\partial_rF)e^{\re F}\,d\theta}{\int e^{\re F}\,d\theta}\right)^2=0,
\end{equation*}
which gives
\begin{align*}
&\frac{r^2\int(\re\partial_r^2F)e^{\re F}\,d\theta}{\int e^{\re F}\,d\theta}=-\frac{r\int(\re\partial_rF)e^{\re F}\,d\theta}{\int e^{\re F}\,d\theta}+
\frac{r^2\int(\im\partial_rF)^2e^{\re F}\,d\theta}{\int e^{\re F}\,d\theta}-r^2\left(\frac{\int(\im\partial_rF)e^{\re F}\,d\theta}{\int e^{\re F}\,d\theta}\right)^2\\
&=-rE_n'(r)-n(n-1)+\frac{r^2\int(\im\partial_rF)^2e^{\re F}\,d\theta}{\int e^{\re F}\,d\theta}-r^2\left(\frac{\int(\im\partial_rF)e^{\re F}\,d\theta}{\int e^{\re F}\,d\theta}\right)^2,
\end{align*}
where the last equality follows from \eqref{Enrprime}. We can use this identity in \eqref{Enrbis} to find
\begin{align*}
r^2E_n''(r)&=n(n-1)-rE_n'(r)-n(n-1)+
r^2\left[\frac{\int(\im\partial_rF)^2e^{\re F}\,d\theta}{\int e^{\re F}\,d\theta}-\left(\frac{\int(\im\partial_rF)e^{\re F}\,d\theta}{\int e^{\re F}\,d\theta}\right)^2\right.\\
&+\left.\frac{\int(\re\partial_rF)^2e^{\re F}\,d\theta}{\int e^{\re F}\,d\theta}-\left(\frac{\int(\re\partial_rF)e^{\re F}\,d\theta}{\int e^{\re F}\,d\theta}\right)^2\right].
\end{align*}
This can be written 
\begin{align*}
&\frac 1{r^2}(r^2E_n''(r)+rE_n'(r))(\int e^{\re F}\,d\theta)^2
=\frac 12\int d\theta\int d\theta'\big[(\im\partial_rF(r,\alpha,\theta)-\im\partial_rF(r,\alpha,\theta'))^2\\
&+(\re\partial_rF(r,\alpha,\theta)-\re\partial_rF(r,\alpha,\theta'))^2\big]e^{\re F(r,\alpha,\theta)+\re F(r,\alpha,\theta')}\ge 0,
\end{align*}
and we have proved the inequality \eqref{Enrconv}.

We have that
\begin{equation}\label{Znphi}
\frac{Z_n(\gamma_r)}{(2\pi)^n}=\frac 1{n!}\int_{[-\pi,\pi]^n}\prod_{\mu\neq\nu}|\phi_r(e^{\I\theta_\mu})-\phi_r(e^{\I\theta_\nu})|\prod_\mu|\phi_r'(e^{\I\theta_\nu})|\,d\theta.
\end{equation}
Since the series \eqref{phiexp} is absolutely convergent for $|z|>1$, there is a constant $C$ so that $|\phi_{-k}|\le C2^k$ for all $k\ge 1$. Now, by  \eqref{phiexp}
\begin{equation}\label{phirphirprime}
\phi_r(z)=z+\sum_{k=0}^\infty\frac{\phi_{-k}}{r^{k+1}z^k}, \quad \text{and} \quad \phi_r'(z)=1+\sum_{k=1}^\infty\frac{k\phi_{-k}}{r^{k+1}z^{k+1}},
\end{equation}
and consequently $\phi_r(z)\to z$ and $\phi_r'(z)\to 1$ uniformly for $z\in\T$ as $r\to\infty$. Hence, we can take the limit $r\to\infty$ in \eqref{Znphi} to obtain
\begin{equation*}
\lim_{r\to\infty}
\frac{Z_n(\gamma_r)}{(2\pi)^n}=\frac 1{(2\pi)^nn!}\int_{[-\pi,\pi]^n}\prod_{\mu\neq\nu}|e^{\I\theta_\mu}-e^{\I\theta_\nu}|\,d\theta=1.
\end{equation*}
This proves \eqref{Enrlimit} and we are done.
\end{proof}

Now we can give the
\begin{proof}[Proof of Lemma \ref{lem:Enrdecr}]
Assume that $E_n'(r_0)>0$ for some $r_0>1$. From \eqref{Enrconv} we see that $rE_n'(r)$ is increasing and hence $rE_n'(r)\ge r_0E_n'(r_0)$ for $r\ge r_0$. Thus,
\begin{equation*}
E_n(r)\ge E_n(r_0)+r_0E_n'(r_0)\int_{r_0}^r\frac{ds}s=E_n(r_0)+r_0E_n'(r_0)\log(r/r_0).
\end{equation*}
If we let $r\to\infty$ this contradicts \eqref{Enrlimit}. Consequently, $E_n'(r)\le 0$ for all $r>1$.
\end{proof}

We turn now to the proof of Lemma \ref{lem:Enrincr}.

\begin{proof}[Proof of Lemma \ref{lem:Enrincr}]
Let $\Pi_n$ be the set of all polynomials of degree $\le n$ with leading coefficient $=1$. Then, see \cite[Sec. 16.2]{Sz}, \cite{Sz2}, we have that
\begin{equation}\label{minpol}
\frac{Z_{n+1}(\gamma_r)/(2\pi)^{n+1}}{Z_{n}(\gamma_r)/(2\pi)^{n}}=\frac{D_{n+1}(1)/(2\pi)^{n+1}}{D_{n}(1)/(2\pi)^{n}}=\frac 1{\kappa_n^2}
=\min_{p\in\Pi_n}\frac 1{2\pi}\int_{\gamma_r}|p(\zeta)|^2\,|d\zeta|,
\end{equation}
and the minimum is attained if and only if $p(\zeta)=\pi_n(\zeta):=\frac 1{\kappa_n}p_n(\zeta)=\zeta^n+\dots$, where $p_n$ are the orthonormal polynomials
with respect to $\gamma_r$,
\begin{equation*}
\int_{\gamma_r}p_m(\zeta)\overline{p_n(\zeta)}\,|d\zeta|=\delta_{mn}.
\end{equation*}
Hence, 
\begin{equation}\label{Enrdifference}
E_{n+1}(r)-E_n(r)=\log\bigg(\frac 1{2\pi}\int_{\gamma_r}|\pi_n(\zeta)|^2\,|d\zeta|\bigg).
\end{equation}
Note that $\phi_r$ is analytic in $|z|>\rho^{-1}$ if $1\le\rho<r$. Fix $\rho\in(1,\infty)$. Then, by \eqref{phirphirprime},
\begin{equation}\label{phiexp2}
z\phi_r(\frac 1z)=1+\sum_{k=0}^\infty\frac{\phi_{-k}}{r^{k+1}}z^{k+1},\quad \text{and} \quad h_r(z):=\phi_r´(\frac 1z)=1+\sum_{k=0}^\infty\frac{k\phi_{-k}}{r^{k+1}}z^{k+1},
\end{equation}
and these functions are analytic in $|z|<\rho$. 
By \eqref{Enrdifference} and Jensen's inequality,
\begin{align}\label{Enrdiffest}
&E_{n+1}(r)-E_n(r)=\log\bigg(\frac 1{2\pi}\int_{-\pi}^\pi|\pi_n(\phi_r(e^{\I\theta}))|^2|\phi_r'(e^{\I\theta})|\,d\theta\bigg)\\
&\ge\frac 1{2\pi}\int_{-\pi}^\pi 2\log|\pi_n(\phi_r(e^{\I\theta}))|+\log|\phi_r'(e^{\I\theta})|\,d\theta
=\frac 1{2\pi}\int_{-\pi}^\pi 2\log|\pi_n(\phi_r(e^{-\I\theta}))|+\log|\phi_r'(e^{-\I\theta})|\,d\theta.\notag
\end{align}
Note that
\begin{equation}\label{pinid}
|\pi_n(\phi_r(e^{-\I\theta}))|=|(e^{\I\theta})^n\pi_n(\phi_r(e^{-\I\theta}))|=|\psi_{n,r}(e^{\I\theta})|,
\end{equation}
where
\begin{equation*}
\psi_{n,r}(z)=z^n\pi_n(\phi_r(\frac 1z)).
\end{equation*}
If $\pi_n(z)=\sum_{j=0}^na_jz^j$, with $a_n=1$, then
\begin{equation}\label{psinrexp}
\psi_{n,r}(z)=z^n\sum_{j=0}^na_j\phi_r(\frac 1z)^j=\sum_{j=0}^na_jz^{n-j}\big(z\phi_r(\frac 1z)\big)^j,
\end{equation}
so we see that $\psi_{n,r}$ is analytic in $|z|<\rho$. Hence, $\log|\psi_{n,r}(z)|$ and $\log|h_r(z)|$ are subharmonic functions in  $|z|<\rho$, and we see from 
\eqref{Enrdiffest} and \eqref{pinid} that
\begin{equation}\label{Enrdiffest2}
E_{n+1}(r)-E_n(r)\ge\frac 1{2\pi}\int_{-\pi}^\pi 2\log|\psi_{n,r}(e^{\I\theta})|+\log|h_r(e^{\I\theta})|\,d\theta
\ge 2\log\psi_{n,r}(0)+\log|h_r(0)|.
\end{equation}
It follows from \eqref{phiexp2}  that $h_r(0)=1$, and from \eqref{phiexp2} and \eqref{psinrexp},
\begin{equation*}
\psi_{n,r}(0)=\sum_{j=0}^na_j 0^{n-j}\cdot 1^j=a_n=1.
\end{equation*}
Consequently, \eqref{Enrdiffest2} gives $E_{n+1}(r)-E_n(r)\ge 0$.
\end{proof}

\section{Heuristic argument for the $\beta$-ensemble}\label{sec:beta}
We will use the same notation, sometimes slightly modified, as in the previous sections and only sketch the argument. Let
\begin{equation*}
\E_{n,\beta}[(\cdot)]=\frac 1{Z_{n,\beta}(\T)n!}\int_{[-\pi,\pi]^n}\prod_{\mu\neq\nu}|e^{\I\theta_\mu}-e^{\I\theta_\nu}|^{\beta/2}(\cdot)\,d\theta
\end{equation*}
denote expectation with respect to the $\beta$-ensemble on the unit circle. As in \eqref{Dn1}, we see that
\begin{align}\label{Dnbeta}
D_{n,\beta}[e^g]&=Z_{n,\beta}(\T)\text{\rm cap}(\gamma)^{\frac{\beta n^2}2}\E_{n,\beta}\left[
\exp\left(-\frac{\beta}2\re\sum_{k,\ell=1}^\infty a_{k\ell}\left(\sum_\mu e^{-\I k\theta_\mu}\right)\left(\sum_\nu e^{-\I\ell\theta_\nu}\right)\right.\right.\\
&\left.\left.+(1-\frac{\beta}2)\sum_\mu\log|\phi'(e^{\I\theta_\mu})|+\sum_\mu g(\phi(e^{\I\theta_\mu}))\right)\right]\notag\\
&=Z_{n,\beta}(\T)\text{\rm cap}(\gamma)^{\frac{\beta n^2}2}\E_{n,\beta}\left[\exp\left(-\frac{\beta}2\re\sum_{k,\ell=1}^\infty a_{k\ell}\left(\sum_\mu e^{-\I k\theta_\mu}\right)\left(\sum_\nu e^{-\I\ell\theta_\nu}\right)+2\mathbf{g}_\beta^t\begin{pmatrix} \mathbf{X} \\ \mathbf{Y}\end{pmatrix}\right)\right],\notag
\end{align}
where we used \eqref{logphiprime}. If we write
\begin{equation*}
\sum_{k,\ell=1}^\infty a_{k\ell}\left(\sum_\mu e^{-\I k\theta_\mu}\right)\left(\sum_\nu e^{-\I\ell\theta_\nu}\right)=
\lim_{m\to\infty}\sum_{k,\ell=1}^m a_{k\ell}\left(\sum_\mu e^{-\I k\theta_\mu}\right)\left(\sum_\nu e^{-\I\ell\theta_\nu}\right)
\end{equation*}
in \eqref{Dnbeta}, take the limit outside the expectation and then interchange the order of the $m\to\infty$ and $n\to\infty$ limits, we are led to study the
limit
\begin{equation*}
\lim_{m\to\infty}\lim_{n\to\infty}\E_{n,\beta}\left[\exp\left(-\frac{\beta}2\re\sum_{k,\ell=1}^m b_{k\ell}(X_k-\I Y_k)(X_\ell-\I Y_\ell)+2\mathbf{g}_\beta^t\begin{pmatrix} \mathbf{X} \\ \mathbf{Y}\end{pmatrix}\right)\right].
\end{equation*}
It seems that it is not easy to justify changing the order of the limits but doing so leads, as we will see, to the conjecture \eqref{betalimit}. 
Set $M_{m,\beta}=\sqrt{\frac{\beta}2}M_m(\I)$, with $M_m(\I)$ as in \eqref{Mmzeta}. We can then use a Gaussian integral to write
\begin{align}\label{Enbeta1}
&\E_{n,\beta}\left[\exp\left(-\frac{\beta}2\re\sum_{k,\ell=1}^m b_{k\ell}(X_k-\I Y_k)(X_\ell-\I Y_\ell)+2\mathbf{g}_\beta^t\begin{pmatrix} \mathbf{X} \\ \mathbf{Y}\end{pmatrix}\right)\right]\\
&=\frac 1{\pi^m}\int_{\R^m}du\int_{\R^m}dv\exp(-\begin{pmatrix} \mathbf{u} \\ \mathbf{v} \end{pmatrix}^t\begin{pmatrix} \mathbf{u} \\ \mathbf{v} \end{pmatrix})
\E_{n,\beta}\left[\exp\left(2\begin{pmatrix} \mathbf{u} \\ \mathbf{v} \end{pmatrix}^tM_{m,\beta}+2\mathbf{g}_\beta^t\begin{pmatrix} \mathbf{X} \\ \mathbf{Y}\end{pmatrix}
\right)\right]\notag\\
&=\frac 1{\pi^m}\int_{\R^m}du\int_{\R^m}dv\exp(-\begin{pmatrix} \mathbf{u} \\ \mathbf{v} \end{pmatrix}^t\begin{pmatrix} \mathbf{u} \\ \mathbf{v} \end{pmatrix})
\E_{n,\beta}\left[\exp\left(2(L_{m,\beta}+\mathbf{g}_\beta)^t\begin{pmatrix} \mathbf{X} \\ \mathbf{Y}\end{pmatrix}\right)\right],\notag
\end{align}
where
\begin{equation*}
L_{m,\beta}=\sqrt{\frac{\beta}2}L_m(\I)=\sqrt{\frac{\beta}2}\begin{pmatrix} P_m & 0 \\  0  &  P_m \end{pmatrix}T_m\begin{pmatrix} \I \Lambda_m^{1/2} & 0 \\ 0  &  \Lambda_m^{1/2}\end{pmatrix}
\begin{pmatrix} \mathbf{u} \\ \mathbf{v} \end{pmatrix}.
\end{equation*}
We can now use the strong Szeg\H{o} limit theorem for the $\beta$-ensemble on the unit circle to take the $n\to\infty$ limit in the last expectation in \eqref{Enbeta1}. This gives the limit
\begin{equation}\label{Enbeta2}
\frac 1{\pi^m}\int_{\R^m}du\int_{\R^m}dv\exp(-\begin{pmatrix} \mathbf{u} \\ \mathbf{v} \end{pmatrix}^t\begin{pmatrix} \mathbf{u} \\ \mathbf{v} \end{pmatrix})
\exp\left(\frac{2}{\beta}(L_{m,\beta}+\mathbf{g}_\beta)^t (L_{m,\beta}+\mathbf{g}_\beta)\right).
\end{equation}
Now,
\begin{equation*}
\frac 2{\beta}L_{m,\beta}^tL_{m,\beta}=-\begin{pmatrix} \mathbf{u} \\ \mathbf{v} \end{pmatrix}^t\left(I-\begin{pmatrix} -\Lambda_m & 0 \\ 0 & \Lambda_m\end{pmatrix}\right)
\begin{pmatrix} \mathbf{u} \\ \mathbf{v} \end{pmatrix},
\end{equation*}
and
\begin{equation*}
\frac 4{\beta}L_{m,\beta}\mathbf{g}_\beta=2\sqrt{\frac 2{\beta}}\begin{pmatrix} \mathbf{u} \\ \mathbf{v} \end{pmatrix}^t
\begin{pmatrix} \I\Lambda_m^{1/2} & 0 \\ 0 & \Lambda_m^{1/2}\end{pmatrix}T_m\begin{pmatrix} P_m & 0 \\  0  &  P_m \end{pmatrix}\mathbf{g}_\beta.
\end{equation*}
We can now perform the Gaussian integrations in \eqref{Enbeta2} to get
\begin{equation*}
\frac 1{\sqrt{\det(I+K_m)}}\exp\left(-\frac 2{\beta}\mathbf{g}_\beta\begin{pmatrix} P_m & 0 \\  0  &  P_m \end{pmatrix}(I+K_m)^{-1}K_m\begin{pmatrix} P_m & 0 \\  0  &  P_m \end{pmatrix}\mathbf{g}_\beta
+\frac 2{\beta}\mathbf{g}_\beta^t\mathbf{g}_\beta\right).
\end{equation*}
If we take the $m\to\infty$ limit of this expression we obtain the right side of \eqref{betalimit}.


\end{document}